\documentclass[letterpaper, 10 pt, conference]{ieeeconf}  

\IEEEoverridecommandlockouts                              

\usepackage{graphicx}
\usepackage{epsfig}
\usepackage{times}
\usepackage{amsmath}
\usepackage{amssymb}

\usepackage{algorithm} 
\usepackage{algpseudocode} 
\usepackage{amsfonts}
\usepackage{bm}
\usepackage{cite}
\usepackage{commath}
\usepackage{fancyhdr}
\usepackage{float}
\usepackage[T1]{fontenc}
\usepackage[utf8]{inputenc}
\usepackage{xcolor}

\usepackage{soul} 
\usepackage{mathtools}
\mathtoolsset{showonlyrefs}

\let\labelindent\relax
\usepackage{enumitem}
\usepackage{hyperref}

\usepackage{amsthm}
\usepackage{subcaption}

\allowdisplaybreaks

\usepackage{auxiliary}
\usepackage{blindtext}

\renewcommand{\top}{\textit{\footnotesize \texttt{T}}} 

\newtheorem{theorem}{Theorem}
\newtheorem{lemma}[theorem]{Lemma}
\newtheorem{prop}[theorem]{Proposition}

\newtheorem{defn}{Definition}

\newtheorem{rem}{Remark}
\newtheorem{assumption}{Assumption}
\makeatletter
\let\NAT@parse\undefined
\makeatother

\allowdisplaybreaks

\title{\LARGE \bf Intrinsic Decentralized Stochastic Riemannian Optimization \\ on Manifolds with Bounded Sectional Curvature}

\author{Duc Toan Nguyen, C\'esar A. Uribe
	\thanks{Department of Electrical and Computer Engineering, Rice University, Houston, TX, USA. \{duc.toan.nguyen,cauribe\}@rice.edu}
}

\begin{document}
	
	\maketitle
	\thispagestyle{empty}
	\pagestyle{empty}

\begin{abstract}
Decentralized optimization on Riemannian manifolds is foundational for many modern machine learning and signal processing applications in which data are non-Euclidean and generated and processed in a distributed manner. Although intrinsic Riemannian methods exploit manifold geometry without relying on Euclidean embeddings, existing decentralized Riemannian optimization algorithms typically use constant step sizes and therefore converge only to a neighborhood of steady-state error. In this paper, we study the decentralized stochastic Riemannian gradient method in the diminishing step-size regime on manifolds with (possibly positive) bounded sectional curvature. We prove an $\mathcal{O}(1/T)$ bound for the network consensus error and an $\mathcal{O}(\log T/\sqrt{T})$ ergodic bound for the global optimality gap. To the best of our knowledge, this is the first exact, non-asymptotic optimality-gap guarantee for an intrinsic decentralized stochastic Riemannian method in the geodesically convex setting. Furthermore, the diminishing step-size schedule allows substantially larger initial gradient steps than fixed-step baselines, leading to better performance in practice. We illustrate this on the problem of distributed PCA over a Grassmann manifold.
\end{abstract}


\section{Introduction}
We consider the following optimization over a geodesically convex subset $\mathcal{X}$ of a $d$-dimensional Riemannian manifold $\mathcal{M}$:
\begin{equation}\label{prob:centralized-stochastic}
    \min_{x\in\mathcal{X}} \frac{1}{n}\sum_{i=1}^n f_i(x),
\end{equation}
where each local objective is $f_i(x):=\mathbb{E}_{\xi_i\sim D_i}[F_i(x,\xi_i)]$ and is assumed to be geodesically convex, and assume a minimizer $x_\ast \in \arg\min_{x\in\mathcal{X}} \frac{1}{n}\sum_{i=1}^n f_i(x)$ exists. In the empirical setting, each $f_i$ takes the form $f_i(x):=\frac{1}{m_i}\sum_{j=1}^{m_i} F_i(x,\xi_i^j)$. Our goal is to solve \eqref{prob:centralized-stochastic} in a decentralized manner over a network of $n$ agents, where agent $i$ has access only to its own objective $f_i$. To this end, we introduce local copies $(x_1,\dots,x_n)\in\mathcal{X}^n$ and look for solutions to the lifted function $f(\mathbf{x}) \triangleq ({1}/{n})\sum_{i=1}^n f_i(x_i)$ with $x_1=x_2=\cdots = x_n$. In particular, if $x_\ast$ solves \eqref{prob:centralized-stochastic}, then $\mathbf{x}_\ast:=(x_\ast,\dots,x_\ast)$ is a solution of the lifted problem. This formulation is well-suited to applications where data are distributed across agents, since each agent can evaluate its own cost function locally, improving scalability, preserving privacy, and avoiding centralized data aggregation. Problems of this form arise in control, machine learning, and signal processing, including distributed PCA \cite{zhang2016first, wang2024icassp}, Gaussian mixture models \cite{hosseini2015matrix, wang2025riemannian}, and matrix completion \cite{boumal2011rtrmc}.

The literature on decentralized optimization on Riemannian manifolds can be broadly divided into extrinsic and intrinsic approaches. Extrinsic methods operate on submanifolds embedded in Euclidean spaces \cite{sarlette2009consensus, chen2021decentralized, chen2024decentralized, deng2025decentralized, zhao2026distributed}. They achieve network agreement through standard Euclidean consensus and the induced arithmetic mean, followed by projection or retraction steps to pull the averaged states back onto the manifold. However, this dependency restricts their applicability to specific manifolds such as the Stiefel manifold. In contrast, intrinsic methods operate directly on the manifold by exploiting its geometric structure. These approaches rely on tools such as the Fréchet mean, geodesics, Riemannian gradients, and exponential maps, enabling their application to a broader class of manifolds, including Grassmann manifolds \cite{edelman1998geometry} and Bures-Wasserstein manifolds \cite{altschuler2021averaging}. 

Tron et al. \cite{tron2012riemannian} pioneered the use of intrinsic methods to solve consensus problems on manifolds with bounded curvature. Subsequently, \cite{shah2017distributed} extended gradient tracking to the Riemannian setting, achieving asymptotic convergence for convex functions. More recently, \cite{wang2025riemannian, wang2025distributed} generalized diffusion adaptation methods to both convex and non-convex settings; however, their convergence bounds are inexact. Because they rely on fixed step sizes, the algorithms converge only to a steady-state neighborhood of the optimizer. Moreover, their analysis requires geodesic convexity of the squared distance function, an assumption that fails on some manifolds, such as the Bures--Wasserstein manifold \cite{altschuler2021averaging}. Chen and Sun~\cite{chensun2024doro} studied decentralized online geodesically convex optimization on Hadamard manifolds and established dynamic-regret guarantees using both weighted Fr\'echet-mean consensus and a simplified intrinsic consensus step. Sahinoglu and Shahrampour~\cite{sahinoglu2025decentralized} then extended decentralized online Riemannian optimization beyond Hadamard manifolds to spaces with possibly positive curvature via a curvature-aware consensus analysis. 

In this work, we build upon the intrinsic decentralized Riemannian optimization methods in \cite{wang2025riemannian, sahinoglu2025decentralized} by introducing a diminishing gradient step-size. To the best of our knowledge, this is the first exact, non-asymptotic optimality-gap guarantee for an intrinsic decentralized stochastic Riemannian diffusion-type method in the static geodesically convex setting. We additionally prove a non-asymptotic $\mathcal{O}(1/T)$ bound for the network consensus error. By employing a diminishing step-size schedule, our method accommodates a much larger initial step size than fixed-step baselines. Numerical experiments show faster convergence for distributed PCA on the Grassmann manifold.

This paper is organized as follows: Section~\ref{sec:Algorithm} introduces and analyzes our Decentralized Riemannian Optimization algorithm with diminishing step sizes; Section~\ref{sec:Convergence-Analysis} presents the full proofs for the convergence of network consensus (Theorem~\ref{thm:stochastic-consensus}) and the global optimality gap (Theorem~\ref{thm:function-error}); Section~\ref{sec:numerics} includes the numerical experiment with distributed PCA; and Section~\ref{sec:Discussion} contains conclusions and future work.

\vspace{-0.2cm}
\section{Algorithm and Results
}\label{sec:Algorithm}
In this work, we study the decentralized Riemannian diffusion algorithm introduced in \cite{wang2025distributed,sahinoglu2025decentralized} under a diminishing gradient step-size regime. Specifically, we consider the case where the gradient step-size $\eta_t$ varies with the iteration index $t$, while the consensus step-size remains fixed at $s>0$. Let $W=[w_{ij}]$ denote the graph mixing matrix. At iteration $t$, each agent $i$ performs the following two updates:
\begin{subequations}\label{eqn:drd-updates}
\begin{align}
\quad
y_i^{t+1} &= \exp_{x_i^t}\bigl(-\eta_t\,\widehat{\grad} f_i(x_i^t)\bigr), \label{eqn:gradient-step}\\
\quad
x_i^{t+1} &= \exp_{y_i^{t+1}}\biggl(s\sum_{j=1}^n w_{ij}\log_{y_i^{t+1}}(y_j^{t+1})\biggr). \label{eqn:consensus-step}
\end{align}
\end{subequations}
Step \eqref{eqn:gradient-step} is a local stochastic Riemannian gradient update, where agent $i$ moves from $x_i^t$ along the geodesic in the direction opposite to the unbiased stochastic gradient estimator $\widehat{\grad} f_i(x_i^t)$, scaled by the step-size $\eta_t$. This produces the intermediate iterate $y_i^{t+1}$. Step \eqref{eqn:consensus-step} is a manifold-valued consensus update: agent $i$ maps its neighbors' intermediate iterates to the tangent space at $y_i^{t+1}$ through the logarithmic map, aggregates them using the weights $w_{ij}$, and maps the resulting weighted tangent vector back to the manifold via the exponential map. The parameter $s$ controls the strength of this consensus correction. Before turning to the convergence analysis, we introduce the main definitions and assumptions used throughout the paper.

\begin{defn}[Geodesically convex function] A function $F: \mathcal{X} \to \mathbb{R}$ is said to be geodesically convex (g-convex) if for all $x,y \in \mathcal{X}$, $F(y) \geq F(x) + \langle \mathrm{grad} F(x), \log_x(
y)\rangle.$
\end{defn}

\begin{defn}[Fréchet mean]
Given \(\{x_i\}_{i=1}^n \subset \mathcal{X}\), define $
\bar{x} \in \arg\min_{x \in \mathcal{X}} \varphi(x)$ and $\varphi(x):=({1}/{n})\sum_{i=1}^n d^2(x_i,x)$. If the minimizer exists, the Fréchet variance is defined by $\mathrm{Var}(\{x_i\}_{i=1}^n):=\varphi(\bar{x})$. If the Fr\'echet mean is not unique, we fix an arbitrary measurable selection and denote it by \(\bar{x}\). 
\end{defn}

\begin{assumption}[Network topology]\label{assum:Network-topology}
The network is connected and the communication matrix
$W=[w_{ij}] \in \mathbb{R}^{n\times n}$ is symmetric, doubly stochastic,
entrywise nonnegative, and has positive diagonal entries. Moreover, $w_{ij}=0$ whenever agents $i$ and $j$ are not connected. Denote by $\sigma_2(W)$ the second largest singular value of $W$.
Under these conditions, $\sigma_2(W)\in[0,1)$.
\end{assumption}

\begin{assumption}[Manifold]\label{assum:manifold}
We assume that
\begin{itemize}[leftmargin=2.0em,left=-1pt,itemsep=1pt, parsep=-1pt, topsep=-0pt, partopsep=-1pt]
\item The set $\mathcal{X}\subset \mathcal{M}$ is geodesically convex.
    \item The sectional curvature $K$ inside $\mathcal{X}$ is bounded from below and above, $K_{\min}~\leq~K ~\leq~K_{\max}$.
    \item The diameter of set $\mathcal{X}$ is bounded by $D< \mathrm{inj}(\mathcal{X})$. If $K_{\max} > 0$, we further assume that $D < {\pi}/{(2\sqrt{K_{\max})}}$, where the injectivity radius $\mathrm{inj}(\mathcal{X}) \triangleq \inf_{x\in\mathcal{X}} \mathrm{inj}_{\mathcal M}(x)$ ensures the unique existence of the logarithmic map $\log_x(y)$ for any $x, y \in \mathcal{X}$.
\end{itemize}
Moreover, we define the geometric constants $C_1$, $C_2$, $C_3$, and $C_4$ to be the same as those in \cite{sahinoglu2025decentralized}.
\end{assumption}

\begin{rem}\label{rem:Frechet-mean-exist}
Under Assumption~\ref{assum:manifold}, and following \cite[Theorem 3.4.2]{wintraecken2015ambient}, the Fréchet mean $\bar{x}$ of any $n$ points $\{ x_i \}_{i=1}^n \subset \mathcal{X}$ exists uniquely and stays inside $\mathcal{X}$.
\end{rem}

\begin{assumption}[Bounded gradients]\label{assum:bounded-grad}
Each \(f_i\) is continuously differentiable on \(\mathcal{X}\), and there exists
\(\delta>0\) such that $\|\grad f_i(x)\|\le \delta$, $\forall x\in\mathcal{X},\ \forall i\in\{1,\dots,n\}$. Consequently, each \(f_i\) is \(\delta\)-Lipschitz on \(\mathcal{X}\).
\end{assumption}

\begin{assumption}[Unbiased estimators] \label{assum:unbiased-estimator}
Denote by $\mathcal{F}_t$ the filtration generated by the iterates $\{\mathbf{x}^k\}_{k=1}^t$ generated by \eqref{eqn:gradient-step} and \eqref{eqn:consensus-step}. We assume that, for each $i$ and $t$,
\begin{itemize}[leftmargin=2.0em,left=-1pt,itemsep=1pt, parsep=-1pt, topsep=-0pt, partopsep=-1pt]
    \item $\mathbb{E}\left[\widehat{\mathrm{grad}} f_i(x_i^t)\mid \mathcal{F}_t\right] = \mathrm{grad} f_i(x_i^t)$,
    \item $\mathbb{E}\left[\left\lVert \widehat{\mathrm{grad}} f_i(x_i^t)-\mathrm{grad} f_i(x_i^t)\right\rVert^2 \mid \mathcal{F}_t\right] \le \sigma^2$,
    \item There exists $G>0$ such that $\left\lVert \widehat{\grad} f_i(x_i^t)\right\rVert \le G$ a.s. 
\end{itemize}
\end{assumption}
Under these assumptions, we can now state the consensus convergence result.
\begin{theorem}\label{thm:stochastic-consensus}
Let Assumptions~\ref{assum:Network-topology}, \ref{assum:manifold},
\ref{assum:bounded-grad}, and \ref{assum:unbiased-estimator} hold. Let
$\{x_i^t\}_{t=1}^{T+1}$ and $\{y_i^{t}\}_{t=1}^{T+1}$ be generated by \eqref{eqn:gradient-step}--\eqref{eqn:consensus-step}
with \(x_i^1=x^1\in\mathcal{X}\) for all \(i\), $\eta_t={\eta_0}/{\sqrt{t}}$, with $\eta_0:=\min\left\{1,{D}/{G}\right\}$, and \(s=C_2/(2C_1)\). 
Assume that all iterates $x_i^t, y_i^{t} \in \mathcal{X}$. Then, for each $t\geq 0$,
$$ \sum_{i=1}^n \mathbb{E}\left[ d^2(x_i^t,\bar{x}^t)\right]
\le {\eta_0^2\,C(\xi)\,nB}/{t}
= \mathcal{O}\left({1}/{t}\right),
$$
where $C(\xi){=}{(1{+}\xi^2)}/{\xi^4}$, $B{=}(1{-}\xi)\bigl(2C_1(\sigma^2{+}\delta^2){+}\xi^{-1}\delta^2\bigr)$, and $\xi={C_2^3(1-\sigma_2(W))}/{(4C_1(1+C_4D^2)^2)}$.
\end{theorem}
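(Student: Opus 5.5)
The plan is to set up a one-step recursion for the expected Fréchet variance $V_t := \sum_{i=1}^n \mathbb{E}[d^2(x_i^t,\bar x^t)]$ and then close it by induction against the target $\eta_0^2 C(\xi) nB/t$. The recursion has two parts: a contraction coming from the consensus step \eqref{eqn:consensus-step}, and a perturbation coming from the gradient step \eqref{eqn:gradient-step}.

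\textbf{Consensus step.} I will use the curvature-aware consensus analysis of \cite{sahinoglu2025decentralized}, with the same constants $C_1,\dots,C_4$. With $s=C_2/(2C_1)$ it gives a linear contraction of the Fréchet variance over one mixing step:
\begin{equation}
\sum_{i=1}^n d^2(x_i^{t+1},\bar x^{t+1}) \le (1-\xi)\sum_{i=1}^n d^2(y_i^{t+1},\bar y^{t+1}),
\end{equation}
with $\xi = C_2^3(1-\sigma_2(W))/(4C_1(1+C_4D^2)^2)$. Here $\bar y^{t+1}$ is the Fréchet mean of the intermediate iterates, which exists uniquely by Remark~\ref{rem:Frechet-mean-exist}. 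The assumption that all iterates stay in $\mathcal{X}$ is exactly what licenses using these comparison bounds.

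\textbf{Gradient step.} Since $\bar y^{t+1}$ minimizes $\varphi$, we have $\sum_i d^2(y_i^{t+1},\bar y^{t+1}) \le \sum_i d^2(y_i^{t+1},\bar x^t)$. I will then expand $d^2(\exp_{x_i^t}(-\eta_t v_i),\bar x^t)$ with $v_i=\widehat{\grad} f_i(x_i^t)$ using the trigonometric distance bound with constant $C_1$:
\begin{equation}
d^2(y_i^{t+1},\bar x^t)\le d^2(x_i^t,\bar x^t) + 2\eta_t\langle v_i,\log_{x_i^t}\bar x^t\rangle + C_1\eta_t^2\|v_i\|^2 .
\end{equation}
Taking conditional expectations replaces $v_i$ by $\grad f_i(x_i^t)$ in the cross term, and bounds $\mathbb{E}\|v_i\|^2\le\sigma^2+\delta^2$. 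The cross term is handled by Young's inequality with parameter $\xi$:
\begin{equation}
2\eta_t\langle g,\log\rangle\le \xi d^2 + \xi^{-1}\eta_t^2\delta^2 .
\end{equation}
Combining the two parts yields the recursion
\begin{equation}
V_{t+1}\le (1-\xi)(1+\xi)V_t + \eta_t^2 nB' ,
\end{equation}
where $B'$ matches $B/(1-\xi)$ up to the factor-of-$2$ bookkeeping in $2C_1(\sigma^2+\delta^2)$. Pulling the factor $(1-\xi)$ back out gives $V_{t+1}\le(1-\xi^2)V_t+\eta_t^2 nB$.

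\textbf{Induction.} At $t=1$ we have $V_1=0$, since all agents start at $x^1$, so the base case holds. For the inductive step, assume $V_t\le \eta_0^2 C nB/t$ and use $\eta_t^2=\eta_0^2/t$. It then suffices to show
\begin{equation}
(1-\xi^2)\frac{C}{t}+\frac{1}{t}\le \frac{C}{t+1},
\end{equation}
which is equivalent to $C\bigl(\xi^2-\tfrac{1}{t+1}\bigr)\ge 1$. This is where $C(\xi)=(1+\xi^2)/\xi^4$ must do the work. For $t+1\ge 2/\xi^2$ the inequality follows directly. For small $t$ I would instead use the crude bound $V_t\le nD^2$, or simply note that $C/t$ exceeds any attainable value there, so the target bound holds trivially. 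Choosing $\eta_0\le D/G$ ensures each single gradient step moves by at most $D$.

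\textbf{Main obstacle.} I expect the hardest part to be matching the exact constants: the pairing of $(1-\xi)$ with $(1+\xi)$ versus $(1+\xi^{-1})$, and verifying the induction for all $t$ with precisely $C(\xi)$. If the direct induction fails for small $t$, I would unroll the recursion instead,
\begin{equation}
V_{t}\le nB\eta_0^2\sum_{k<t}\frac{(1-\xi^2)^{t-1-k}}{k},
\end{equation}
and bound the sum by splitting it at $k=t/2$. That split produces a factor of the form $(1+\xi^2)/\xi^4$.
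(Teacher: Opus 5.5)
Your derivation of the one-step recursion matches the paper's: consensus contraction with factor $1-\xi$ from \cite{sahinoglu2025decentralized}, Fr\'echet-mean minimality to replace $\bar y^{t+1}$ by $\bar x^t$, the $C_1$ comparison inequality, and Young's inequality with parameter $\xi$. Your noise term $C_1(\sigma^2+\delta^2)$ is even slightly sharper than the paper's $2C_1(\sigma^2+\delta^2)$. So $V_{t+1}\le(1-\xi^2)V_t+\eta_0^2nB/t$ with $V_1=0$ is fine.

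\textbf{The gap is in solving this recursion with the stated constant $C(\xi)$.} Your inductive step needs $C(\xi)\bigl(\xi^2-\tfrac{1}{t+1}\bigr)\ge 1$. For $C(\xi)=(1+\xi^2)/\xi^4$ this is equivalent to $t\ge 1/\xi^2$. Since $\xi<1$, the induction cannot even start at $t=1$, and it fails on the whole initial range $t<1/\xi^2$. Your patches for that range do not work:
\begin{itemize}
\item The crude bound $V_t\le nD^2$ cannot be compared with the target $\eta_0^2C(\xi)nB/t$. The target scales with $\eta_0^2B$, which can be arbitrarily small relative to $D^2$ (e.g.\ $\sigma,\delta\to 0$ with $D$ and $G$ fixed).
\item The remark ``$C/t$ exceeds any attainable value'' simply restates what has to be proved.
\item Splitting the unrolled sum at $k=t/2$ gives $2/\xi^2$ from $k\ge t/2$, plus a term governed by $\sup_t t\rho^{t/2}$ from $k<t/2$. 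That does not produce $(1+\xi^2)/\xi^4$ by itself. Showing it stays below that value needs an explicit computation and the bound $\xi\le 1/4$ (from $C_2\le 1\le C_1$), neither of which you supply.
\end{itemize}

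\textbf{The missing idea is the elementary inequality the paper uses.} For $1\le k\le t-1$, $(k-1)(t-k)\ge 0$ gives $t/k\le t-k+1$. With $\rho=1-\xi^2$ and $j=t-1-k$,
\begin{align*}
tV_t&\le\eta_0^2nB\sum_{k=1}^{t-1}\rho^{t-1-k}\frac{t}{k}\le\eta_0^2nB\sum_{j\ge 0}(j+2)\rho^j\\
&=\eta_0^2nB\Bigl(\frac{\rho}{(1-\rho)^2}+\frac{2}{1-\rho}\Bigr)=\eta_0^2nB\,\frac{1+\xi^2}{\xi^4}.
\end{align*}

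Alternatively, you can rescue your induction without the split. Set $u=1/\xi^2$. For $2\le t\le u+1$, bound the unrolled sum using $\rho\le 1$ to get $V_t\le\eta_0^2nB\,(1+\log(t-1))$. Then $t(1+\log(t-1))\le (u+1)(1+\log u)\le u(u+1)=C(\xi)$, because $\log u\le u-1$. From $t_0=\lceil u\rceil\le u+1$ onward, your inductive step is valid.
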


\begin{rem}
The assumption that all iterates stay inside the domain $\mathcal{X}$ is a standard regularity condition in Riemannian optimization \cite{zhang2018estimate, kim2022accelerated, wang2025riemannian}. One can remove the assumption of $y_i^{t+1} \in \mathcal{X}$ if a costly projection $\mathcal{P}_{\mathcal{X}}$ is added to Step~\eqref{eqn:gradient-step}~\cite{zhang2016first, sahinoglu2025decentralized}. In~\cite{chen2024decentralized,sahinoglu2025decentralized} a projection for Step~\eqref{eqn:consensus-step} is not explicit, however, assuming every $y_j^{t+1}\in\mathcal{X}$ does not guarantee that $x_i^{t+1} \in \mathcal{X}$ on positively curved manifolds. 
\end{rem}

We next state the exact sublinear convergence rate for the global optimality gap.
\begin{theorem}\label{thm:function-error}
Let Assumptions~\ref{assum:Network-topology}, \ref{assum:manifold},
\ref{assum:bounded-grad}, and \ref{assum:unbiased-estimator} hold. Assume each \(f_i\) is geodesically
convex on \(\mathcal{X}\). Let
$\{x_i^t\}_{t=1}^{T+1}$ and $\{y_i^{t}\}_{t=1}^{T+1}$ be generated by \eqref{eqn:gradient-step}--\eqref{eqn:consensus-step}
with \(x_i^1=x^1\in\mathcal{X}\) for all \(i\), $\eta_t={\eta_0}/{\sqrt{t}}$, with $\eta_0:=\min\left\{1,{D}/{G}\right\}$ for $t \in \{1,\ldots,T \}$, and \(s=C_2/(2C_1)\). Assume that all iterates $x_i^t, y_i^{t} \in \mathcal{X}$. Then, for every horizon $T\geq 1$,
\begin{align*}
&\frac{\sum_{t=1}^T \eta_t\,\mathbb{E}[f(\bar{\mathbf{x}}^t)-f(\mathbf{x}_\ast)]}
{\sum_{t=1}^T \eta_t}
\le
\frac{1}{2n\eta_0\sqrt{T}}\sum_{i=1}^n \mathbb{E}[d^2(x_i^1,x_\ast)] \\
&\quad+
\eta_0\Bigl(\delta\sqrt{C(\xi)B}+C_1(\sigma^2+\delta^2)\Bigr)
\frac{1+\log T}{\sqrt{T}},
\end{align*}
where $\bar{\mathbf{x}}^t {\triangleq} (\bar{x}^t,\dots,\bar{x}^t)$, and $\xi$, $C(\xi)$, and $B$ from Theorem~\ref{thm:stochastic-consensus}.
\end{theorem}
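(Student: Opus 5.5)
The argument is a Lyapunov analysis of the sum of squared distances to the optimum, $\Phi_t:=\sum_{i=1}^n d^2(x_i^t,x_\ast)$, combined with the consensus bound of Theorem~\ref{thm:stochastic-consensus}. First I show that the consensus step \eqref{eqn:consensus-step} does not increase the total squared distance to $x_\ast$:
\[
\sum_i d^2(x_i^{t+1},x_\ast)\le \sum_i d^2(y_i^{t+1},x_\ast).
\]
With $s=C_2/(2C_1)$ and $W$ symmetric and doubly stochastic, this should follow from the curvature comparison inequalities behind $C_1,C_2$ used in \cite{sahinoglu2025decentralized}. One expands $d^2(\exp_y(v),x_\ast)\le d^2(y,x_\ast)-2\langle v,\log_y x_\ast\rangle+C_1\|v\|^2$. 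The cross terms are then controlled using the lower Hessian bound of $d^2(\cdot,x_\ast)$ (constant $C_2$), applied pairwise through symmetry of $W$, so that they absorb the $C_1\|v\|^2$ term.

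For the gradient step \eqref{eqn:gradient-step}, the same comparison inequality with $v=-\eta_t\widehat{\grad} f_i(x_i^t)$ gives
\[
d^2(y_i^{t+1},x_\ast)\le d^2(x_i^t,x_\ast)+2\eta_t\langle \widehat{\grad} f_i(x_i^t),\log_{x_i^t}x_\ast\rangle+C_1\eta_t^2\|\widehat{\grad} f_i(x_i^t)\|^2 .
\]
I take conditional expectations using Assumption~\ref{assum:unbiased-estimator}, which gives $\mathbb{E}\|\widehat{\grad}f_i\|^2\le\sigma^2+\delta^2$. I then apply geodesic convexity, $\langle \grad f_i(x_i^t),\log_{x_i^t}x_\ast\rangle\le f_i(x_\ast)-f_i(x_i^t)$. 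Summing over $i$ and dividing by $n$ yields
\[
2\eta_t\,\mathbb{E}[f(\mathbf{x}^t)-f(\mathbf{x}_\ast)]\le \tfrac1n\bigl(\mathbb{E}\Phi_t-\mathbb{E}\Phi_{t+1}\bigr)+C_1\eta_t^2(\sigma^2+\delta^2).
\]

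Next I pass from $\mathbf{x}^t$ to $\bar{\mathbf{x}}^t$. By Lipschitz continuity (Assumption~\ref{assum:bounded-grad}),
\[
f(\bar{\mathbf{x}}^t)-f(\mathbf{x}^t)\le \tfrac{\delta}{n}\sum_i d(x_i^t,\bar x^t)\le \delta\Bigl(\tfrac1n\sum_i d^2(x_i^t,\bar x^t)\Bigr)^{1/2},
\]
by Cauchy--Schwarz. Jensen's inequality and Theorem~\ref{thm:stochastic-consensus} then bound the expectation of this term by $\delta\eta_0\sqrt{C(\xi)B/t}=\delta\sqrt{C(\xi)B}\,\eta_t$. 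Hence
\[
\eta_t\,\mathbb{E}[f(\bar{\mathbf{x}}^t)-f(\mathbf{x}_\ast)]\le \tfrac{1}{2n}(\mathbb{E}\Phi_t-\mathbb{E}\Phi_{t+1})+\eta_t^2\Bigl(\delta\sqrt{C(\xi)B}+\tfrac{C_1}{2}(\sigma^2+\delta^2)\Bigr).
\]
The stated bound uses $C_1$ in place of $C_1/2$, which leaves slack.

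Summing over $t=1,\dots,T$, the distance terms telescope to at most $\Phi_1/(2n)$. The step sizes satisfy $\sum_t\eta_t^2=\eta_0^2\sum_t 1/t\le\eta_0^2(1+\log T)$ and $\sum_t\eta_t\ge \eta_0\sqrt{T}$, since $\sum_{t\le T}t^{-1/2}\ge\sqrt T$. Dividing by $\sum_t\eta_t$ gives exactly the claimed expression.

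The main obstacle is the nonexpansiveness of the consensus step with respect to $x_\ast$ on positively curved spaces. The exponential map there can expand distances, so the $C_1\|v\|^2$ term must be beaten by the negative cross term. I would try to derive this from the descent estimate for $\sum_i d^2(\cdot,\cdot)$ already established in \cite{sahinoglu2025decentralized} under the same choice of $s$. If exact nonexpansiveness fails, I would instead carry an extra $\mathcal{O}(\text{consensus error})$ term and bound it with Theorem~\ref{thm:stochastic-consensus}. That term, of order $\eta_t^2$, would need to be absorbed into the $\eta_t^2$ terms, possibly at the cost of changing the constants in the statement.
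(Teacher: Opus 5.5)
Your proposal is correct and is essentially the paper's proof. It uses the same potential $\sum_i d^2(x_i^t,x_\ast)$, the $C_1$ comparison inequality plus geodesic convexity for the gradient step, and nonexpansiveness of the consensus step obtained by the mechanism you sketch: the upper bound with $C_1$, Jensen's inequality $\|\sum_j w_{ij}\log_{y_i^{t+1}}(y_j^{t+1})\|^2\le\sum_j w_{ij}d^2(y_i^{t+1},y_j^{t+1})$, and the pairwise $C_2$ lower bound whose distance-to-$x_\ast$ terms cancel by double stochasticity, leaving a net coefficient $C_2s-C_1s^2=C_2^2/(4C_1)>0$, so your fallback is unnecessary. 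It then passes to $\bar{\mathbf{x}}^t$ via Lipschitz continuity, Cauchy--Schwarz, Jensen and Theorem~\ref{thm:stochastic-consensus}, with the same step-size sums. Your $C_1/2$ in place of $C_1$ is a legitimate sharpening: you use $\mathbb{E}\|\hat g_i^t\|^2\le\sigma^2+\delta^2$, valid by unbiasedness, where the paper uses the cruder bound $2(\sigma^2+\delta^2)$.
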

The proofs for these theorems are presented in Section~\ref{sec:Convergence-Analysis}.
\vspace{-0.5cm}
\begin{rem}The sublinear convergence rate $\mathcal{O}(\log T/\sqrt{T})$ nearly matches the rate of first-order centralized Riemannian methods \cite{zhang2016first} and the rate of decentralized stochastic optimization in Euclidean space \cite{sirb2018decentralized}, all with diminishing step sizes.
\end{rem}
\section{Convergence Analysis
}\label{sec:Convergence-Analysis}
In this section, we present detailed proofs of the two main convergence theorems for our proposed algorithm. First, we have two foundational geometric lemmas.
\begin{lemma}[{\cite[Lemma 2]{wang2025distributed}}] \label{lemma:cosine-law}
Let $a,b,c \in \mathcal{X}$ and let Assumption~\ref{assum:manifold} hold. Then,
\begin{equation}
\begin{aligned}
    d^2(a,c) &\leq C_1 d^2(b,c) +  d^2(a,b) - 2 \langle \log_b(a), \log_b(c) \rangle, \\
    d^2(a,c) &\geq C_2 d^2(b,c) +  d^2(a,b) - 2 \langle \log_b(a), \log_b(c) \rangle.
\end{aligned}
\end{equation}
\end{lemma}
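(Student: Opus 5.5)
We prove both inequalities by a second-order Taylor argument along the geodesic from $b$ to $c$. The curvature bounds of Assumption~\ref{assum:manifold} enter only through Hessian comparison for the squared distance function. Fix $a,b,c\in\mathcal{X}$ and let $\gamma(t)=\exp_b(t\log_b(c))$ for $t\in[0,1]$. Since $\mathcal{X}$ is geodesically convex and $D<\mathrm{inj}(\mathcal{X})$, this is the unique minimizing geodesic from $b$ to $c$. It stays in $\mathcal{X}$, and $\|\dot\gamma(t)\|=d(b,c)$.

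Define $g(t)=\tfrac12 d^2(a,\gamma(t))$. Because $d(a,\gamma(t))\le D<\mathrm{inj}(\mathcal{X})$, the function $x\mapsto \tfrac12 d^2(a,x)$ is smooth on $\mathcal{X}$, with Riemannian gradient $-\log_x(a)$. The first-order data are therefore
\[
g(0)=\tfrac12 d^2(a,b),\qquad g'(0)=-\langle \log_b(a),\log_b(c)\rangle .
\]

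The key step is to show that, for all $t\in[0,1]$,
\[
C_2\, d^2(b,c)\le g''(t)\le C_1\, d^2(b,c).
\]
Since $\gamma$ is a geodesic, $g''(t)=\mathrm{Hess}\,\tfrac12 d^2(a,\cdot)(\dot\gamma,\dot\gamma)$. We apply the standard Hessian (Rauch) comparison theorem with $r=d(a,\gamma(t))\le D$:
\begin{itemize}
\item The lower curvature bound $K\ge K_{\min}$ bounds the Hessian eigenvalues above by $\sqrt{|K_{\min}|}\,r\coth(\sqrt{|K_{\min}|}\,r)$ when $K_{\min}<0$, and by $1$ otherwise.
\item The upper curvature bound $K\le K_{\max}$ bounds them below by $\sqrt{K_{\max}}\,r\cot(\sqrt{K_{\max}}\,r)$ when $K_{\max}>0$, and by $1$ otherwise.
\end{itemize}
Both bounds are monotone in $r$, so they can be replaced by their values at $r=D$. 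These values are the geometric constants $C_1$ and $C_2$ from Assumption~\ref{assum:manifold}, as in \cite{sahinoglu2025decentralized}. The degenerate point $\gamma(t)=a$ is harmless, since there the Hessian is the identity and $C_2\le 1\le C_1$.

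With the bound on $g''$ in hand, Taylor's theorem with integral remainder gives
\[
g(0)+g'(0)+\tfrac12 C_2 d^2(b,c)\;\le\; g(1)\;\le\; g(0)+g'(0)+\tfrac12 C_1 d^2(b,c).
\]
Substituting $g(1)=\tfrac12 d^2(a,c)$ and the expressions for $g(0)$ and $g'(0)$, and multiplying by $2$, yields exactly the two inequalities of the lemma.

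The main obstacle is justifying the comparison step uniformly along $\gamma$. Three things must hold:
\begin{itemize}
\item $\gamma$ never leaves the region where $d^2(a,\cdot)$ is smooth, that is, it avoids the cut locus of $a$. This follows from geodesic convexity of $\mathcal{X}$ together with $D<\mathrm{inj}(\mathcal{X})$.
\item In the positively curved case, $\sqrt{K_{\max}}\,D<\pi/2$. This is required by Assumption~\ref{assum:manifold} and makes $C_2>0$.
\item The comparison bounds must be applied to the Hessian of $\tfrac12 d^2$, rather than of $d$. This is done via $\mathrm{Hess}\,\tfrac12 d^2 = \nabla d\otimes\nabla d + d\,\mathrm{Hess}\, d$, with the radial direction contributing eigenvalue exactly $1$.
\end{itemize}
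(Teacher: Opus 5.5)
Your proof is correct. The paper gives no argument of its own for this lemma; it quotes the result from \cite[Lemma 2]{wang2025distributed}. Your write-up therefore supplies a self-contained derivation of something the paper takes as given.

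Your route bounds $g''$ by Hessian comparison for $\tfrac12 d^2(a,\cdot)$ and then applies Taylor's formula with integral remainder along the geodesic from $b$ to $c$. This handles both inequalities at once and shows where each hypothesis enters:
\begin{itemize}
\item $D<\mathrm{inj}(\mathcal{X})$ gives smoothness of $d^2(a,\cdot)$ along $\gamma$.
\item Monotonicity in $r$ lets you pass to the constants evaluated at $r=D$.
\item The stronger condition $D<\pi/(2\sqrt{K_{\max}})$ serves only to make $C_2>0$; the comparison itself needs just $D<\pi/\sqrt{K_{\max}}$.
\end{itemize}

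The classical alternative for the upper bound, as in \cite{zhang2016first}, uses Toponogov's theorem to compare with a hyperbolic model triangle and then the hyperbolic law of cosines. That argument is purely metric and extends to Alexandrov spaces. Yours relies on smoothness, but it is shorter and treats the two curvature bounds symmetrically.

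One detail is worth stating explicitly. The Hessian comparison at $\gamma(t)$ uses curvature along the minimizing geodesic from $a$ to $\gamma(t)$, not along $\gamma$. Assumption~\ref{assum:manifold} bounds $K$ only inside $\mathcal{X}$, so you need that segment to lie in $\mathcal{X}$, which geodesic convexity provides.
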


\begin{lemma}[\protect{\cite[Lemma 4]{sun2019escaping}}]\label{lemma:bound-diff-log}
Let $x, y, z \in \mathcal{X}$ with the distance between each pair of points not being greater than $D$. Then, under Assumption~\ref{assum:manifold} we have
\begin{align}
    (1+C_3D^2)^{-1} d(y,z)& \leq \lVert \log_{x}(y) - \log_{x}(z) \rVert \\
    &\leq (1+C_4D^2)d(y,z).
\end{align}
\end{lemma}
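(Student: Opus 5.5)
The plan is to compare the two quantities through the differentials of $\exp_x$ and $\log_x$, and to control those differentials with the Rauch comparison theorem using the curvature bounds $K_{\min}\le K\le K_{\max}$ of Assumption~\ref{assum:manifold}. The radial direction is an isometry for $d\exp_x$ by the Gauss lemma. For tangent vectors orthogonal to $v$ with $\|v\|=r$, Rauch comparison gives
\begin{equation}
\frac{\sin(\sqrt{K_{\max}}\,r)}{\sqrt{K_{\max}}\,r}\,\|u\| \le \|(d\exp_x)_v u\| \le \frac{\sinh(\sqrt{-K_{\min}}\,r)}{\sqrt{-K_{\min}}\,r}\,\|u\|.
\end{equation}
When $K_{\max}\le 0$ or $K_{\min}\ge 0$, the corresponding factor is replaced by $1$. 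These bounds hold for $r\le D<\mathrm{inj}(\mathcal{X})$, and if $K_{\max}>0$ also $r<\pi/(2\sqrt{K_{\max}})$, as guaranteed by Assumption~\ref{assum:manifold}.

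\emph{Upper bound.} Let $\gamma:[0,1]\to\mathcal{M}$ be the minimizing geodesic from $z$ to $y$, so that $\|\gamma'\|=d(y,z)$. By geodesic convexity of $\mathcal{X}$, $\gamma$ stays in $\mathcal{X}$, hence $d(x,\gamma(\tau))\le D$ for all $\tau$. Set $c(\tau)=\log_x(\gamma(\tau))$. Then
\begin{equation}
\log_x(y)-\log_x(z)=\int_0^1 (d\log_x)_{\gamma(\tau)}\gamma'(\tau)\,d\tau .
\end{equation}
Since $d\log_x=(d\exp_x)^{-1}$, its operator norm is at most $\sqrt{K_{\max}}\,r/\sin(\sqrt{K_{\max}}\,r)$ with $r\le D$. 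This function of $r$ is increasing, and because $D<\pi/(2\sqrt{K_{\max}})$ it is bounded by $1+C_4D^2$ for a suitable constant $C_4$ depending on $K_{\max}$ and the diameter bound; this is the elementary inequality $a/\sin a\le 1+c\,a^2$ on $[0,\pi/2)$. Integrating gives $\|\log_x(y)-\log_x(z)\|\le(1+C_4D^2)\,d(y,z)$.

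\emph{Lower bound.} Work in $T_x\mathcal{M}$. The segment $v(\tau)=\log_x(z)+\tau(\log_x(y)-\log_x(z))$ lies in the Euclidean ball of radius $D$, since both endpoints have norm at most $D$. The curve $\exp_x(v(\tau))$ joins $z$ to $y$, so
\begin{equation}
d(y,z)\le\int_0^1\|(d\exp_x)_{v(\tau)}v'(\tau)\|\,d\tau\le \frac{\sinh(\sqrt{-K_{\min}}D)}{\sqrt{-K_{\min}}D}\,\|\log_x(y)-\log_x(z)\| .
\end{equation}
The bound $\sinh a/a\le 1+C_3D^2$ for bounded $D$, which follows from $\sinh a/a\le 1+c\,a^2$ on a bounded interval, then yields $(1+C_3D^2)^{-1}d(y,z)\le\|\log_x(y)-\log_x(z)\|$.

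The main obstacle is this lower bound. The image $\exp_x(v(\tau))$ need not remain in $\mathcal{X}$, while the curvature bounds are only assumed inside $\mathcal{X}$. I would handle this in one of two ways. The first is to interpret the curvature bounds as holding on the geodesic ball of radius $D$ around $x$, which is what Rauch comparison actually uses and is implicit in how $C_3$ is defined in \cite{sun2019escaping}. The second is to note that the ball of radius $D<\mathrm{inj}$ is the region where $\exp_x$ is a diffeomorphism, so the comparison applies there. I would also check that the Rauch estimates are applied only within the conjugate-point-free radius. This is exactly what the condition $D<\pi/(2\sqrt{K_{\max}})$ secures, and it also keeps $\sin(\sqrt{K_{\max}}\,r)$ bounded away from zero.
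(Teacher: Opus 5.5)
The paper gives no proof of this lemma; it is imported from \cite{sun2019escaping}, so your Rauch-comparison argument has to stand on its own.

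\textbf{Upper bound: sound.} Under Assumption~\ref{assum:manifold} as written, every radial geodesic from $x$ to a point $\gamma(\tau)$ of the geodesic $zy$ lies in $\mathcal{X}$ by geodesic convexity. So the estimate $\|(d\log_x)_p\|\le \sqrt{K_{\max}}\,r/\sin(\sqrt{K_{\max}}\,r)$ is used only where $K\le K_{\max}$ is assumed, and the resulting $C_4$ depends on $K_{\max}$ alone.

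\textbf{Lower bound: a genuine gap.} Neither of your remedies closes it. The image $\exp_x(v(\tau))$ of the chord, and the radial geodesics from $x$ to it, generally leave $\mathcal{X}$. Take the hyperbolic plane and let $\mathcal{X}$ be the (convex) closed triangle $xyz$ with $d(x,y)=d(x,z)=r$ and angle $\alpha$ at $x$. The chord midpoint is at distance $r\cos(\alpha/2)$ from $x$. The side $yz$ crosses the bisector at distance $r_m$, where $\tanh r_m=\cos(\alpha/2)\tanh r$. Strict concavity of $\tanh$ gives $r\cos(\alpha/2)>r_m$, so your curve bulges outside $\mathcal{X}$. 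Along those radial geodesics Assumption~\ref{assum:manifold} says nothing about the curvature, so $\|d\exp_x\|$ on your comparison curve is not controlled by $K_{\min}$.

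\begin{itemize}
\item Your first remedy assumes $K\ge K_{\min}$ on the whole ball $B(x,D)$, which is a stronger hypothesis than the one stated.
\item Your second remedy is incorrect. The condition $D<\mathrm{inj}(\mathcal{X})$ only rules out conjugate points along radial geodesics, which is a hypothesis of Rauch's theorem. It does not supply the lower curvature bound that drives the upper estimate on $\|d\exp_x\|$.
\end{itemize}

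\textbf{How to fix it: compare first, then straighten.} Apply Toponogov's hinge comparison for the lower bound $K_{\min}$ to the hinge at $x$ formed by the geodesics $xy$ and $xz$, both of which lie in $\mathcal{X}$. This gives $d(y,z)\le\tilde d$, where $\tilde d$ is the third side of the hinge in the model plane of constant curvature $K_{\min}$ with side lengths $d(x,y),d(x,z)$ and the same angle. The model tangent vectors $\tilde u,\tilde v$ satisfy $\|\tilde u-\tilde v\|=\|\log_x(y)-\log_x(z)\|$. In the model plane your chord argument is legitimate, because the curvature equals $K_{\min}$ everywhere. It yields $\tilde d\le \frac{\sinh(\sqrt{-K_{\min}}D)}{\sqrt{-K_{\min}}D}\,\|\log_x(y)-\log_x(z)\|$, with factor $1$ if $K_{\min}\ge 0$. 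This is the same lower-curvature hinge comparison on which the $C_1$ inequality of Lemma~\ref{lemma:cosine-law} rests, so it uses nothing beyond what the paper already relies on.

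\textbf{On the constant $C_3$.} Its dependence on $D$ when $K_{\min}<0$ is unavoidable, not an artifact of your method. In the same hyperbolic example, letting $\alpha\to 0$ gives $d(y,z)/\|\log_x(y)-\log_x(z)\|\to\sinh r/r$. No bound of the form $1+C_3r^2$ with $C_3$ independent of $r$ can dominate this.
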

Now, we show that the iterates $x_i^t$ and $y_i^t$ from our algorithm exist, for every $i$ and $t$.
\begin{lemma}\label{lemma:stay-in-X}
Let Assumptions~\ref{assum:Network-topology}, \ref{assum:manifold}, and \ref{assum:unbiased-estimator} hold. Suppose that at iteration $t$ we have $x_i^t\in\mathcal{X}$ and
$y_i^{t+1}\in\mathcal{X}$ for all $i$. Consider \eqref{eqn:gradient-step}--\eqref{eqn:consensus-step} with $\eta_t=\eta_0/\sqrt{t}$, where $\eta_0\le D/G$, and
$s=C_2/(2C_1)$. Then the logarithmic and exponential maps appearing in
\eqref{eqn:gradient-step} and \eqref{eqn:consensus-step} are well-defined.
\end{lemma}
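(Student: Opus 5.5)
We need to show that every exponential and logarithmic map in \eqref{eqn:gradient-step}--\eqref{eqn:consensus-step} is well-defined at iteration $t$. That means the logarithms $\log_{y_i^{t+1}}(y_j^{t+1})$ must be uniquely defined, and the two geodesics generated by the exponential maps must stay within the injectivity/normal-neighborhood regime controlled by Assumption~\ref{assum:manifold}. The plan is to treat the logarithmic maps first and then bound the length of each tangent vector fed into an exponential map.

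\textbf{Logarithmic maps.} By hypothesis all $y_j^{t+1}\in\mathcal{X}$. Hence $d(y_i^{t+1},y_j^{t+1})\le D<\mathrm{inj}(\mathcal{X})$, and Assumption~\ref{assum:manifold} directly gives existence and uniqueness of $\log_{y_i^{t+1}}(y_j^{t+1})$. Moreover $\|\log_{y_i^{t+1}}(y_j^{t+1})\| = d(y_i^{t+1},y_j^{t+1})\le D$.

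\textbf{Gradient step \eqref{eqn:gradient-step}.} Here $x_i^t\in\mathcal{X}$, and by Assumption~\ref{assum:unbiased-estimator} we have $\|\widehat{\grad} f_i(x_i^t)\|\le G$ a.s. Using $t\ge1$ and $\eta_0\le D/G$, the tangent vector has norm
\[
\eta_t\|\widehat{\grad} f_i(x_i^t)\|\le \frac{\eta_0 G}{\sqrt t}\le D<\mathrm{inj}(\mathcal{X})\le \mathrm{inj}_{\mathcal M}(x_i^t).
\]
So the exponential map is evaluated inside the injectivity ball at $x_i^t$. It is therefore a diffeomorphism there, the geodesic $\tau\mapsto \exp_{x_i^t}(-\tau\eta_t\widehat{\grad} f_i)$ for $\tau\in[0,1]$ is minimizing, and its length is at most $D$. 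Since the manifold is complete (or at least the exponential map is defined on this ball by the injectivity-radius definition), $y_i^{t+1}$ is well-defined. Its membership in $\mathcal{X}$ is the stated hypothesis.

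\textbf{Consensus step \eqref{eqn:consensus-step}.} By the triangle inequality and Assumption~\ref{assum:Network-topology} (row-stochasticity, nonnegativity, $w_{ii}>0$ so that the $j=i$ term vanishes),
\[
\Bigl\|s\sum_j w_{ij}\log_{y_i^{t+1}}(y_j^{t+1})\Bigr\|\le s\sum_{j\ne i}w_{ij}\,d(y_i^{t+1},y_j^{t+1})\le s(1-w_{ii})D< sD.
\]
It remains to show $s=C_2/(2C_1)\le 1$, which gives norm $<D<\mathrm{inj}(\mathcal{X})$. This follows from Lemma~\ref{lemma:cosine-law}: taking $a=b$ in both inequalities yields $C_2 d^2(b,c)\le d^2(a,c)\le C_1d^2(b,c)$, so $C_2\le C_1$. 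Alternatively it follows directly from the definitions in \cite{sahinoglu2025decentralized}, where $C_2\le 1\le C_1$. Hence $s\le 1/2$, and the exponential map at $y_i^{t+1}$ is applied to a vector of norm at most $D/2$. That is within the injectivity radius and, when $K_{\max}>0$, below $\pi/(2\sqrt{K_{\max}})$, so $x_i^{t+1}$ is uniquely defined.

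\textbf{Main obstacle.} The difficulty is interpretive rather than computational: making precise what ``well-defined'' requires of $\exp$. For the exponential map the essential point is that the input vector lies within the injectivity radius, so the geodesic it generates is unique and minimizing; the bounds above give exactly this. I would emphasize that no claim $x_i^{t+1}\in\mathcal{X}$ is made, consistent with the preceding remark, and also note that $\log_{x_i^t}(y_i^{t+1}) = -\eta_t\widehat{\grad} f_i(x_i^t)$ is consistent because the norm is below the injectivity radius.
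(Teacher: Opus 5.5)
Your proof is correct and follows the paper's argument. The gradient-step vector is bounded by $\eta_0 G\le D<\mathrm{inj}(\mathcal{X})$, and the consensus vector is bounded by $sD$ with $s=C_2/(2C_1)\le 1/2$. Your extras are harmless refinements: you check the logarithms explicitly, you use the sharper bound $s(1-w_{ii})D$, and you derive $C_2\le 1\le C_1$ from Lemma~\ref{lemma:cosine-law} with $a=b$ rather than citing the constants' definitions.
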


\begin{proof}
Let \(\hat g_i^t:=\widehat{\mathrm{grad}} f_i(x_i^t)\). Since
\(\eta_t=\eta_0/\sqrt{t}\) with \(\eta_0=\min\{1,D/G\}\), Assumption~\ref{assum:unbiased-estimator}~(iii) gives
\begin{align*}
\|-\eta_t \hat g_i^t\|
\le \eta_t G
= {\eta_0 G}/{\sqrt{t}}
\le \eta_0 G
\le D
< \mathrm{inj}(\mathcal{X}).
\end{align*}
Hence, the gradient update vector lies inside the injectivity domain at
\(x_i^t\), and thus $y_i^{t+1}$ in step \eqref{eqn:gradient-step} are well defined. Next, we analyze the consensus step \eqref{eqn:consensus-step}. By the definitions of \(C_1\) and \(C_2\) \cite{sahinoglu2025decentralized},
we have \(C_1\ge 1\) and \(0<C_2\le 1\), so
\(0<s=C_2/(2C_1)\le 1/2\). Hence
\begin{align*}
\left\|s\sum_{j=1}^n w_{ij}\log_{y_i^{t+1}}(y_j^{t+1})\right\|
&\le s\sum_{j=1}^n w_{ij} d(y_i^{t+1},y_j^{t+1}) \\
&\le sD
< \mathrm{inj}(\mathcal{X}),
\end{align*}
and therefore $x_i^{t+1}$ in step \eqref{eqn:consensus-step} are well defined.

\end{proof}

Next, we will show the convergence of the consensus of our algorithm. We start by analyzing the relationships between three quantities: $\mathrm{Var}(\{y_i^t\}_{i=1}^n)$, $\sum_{i=1}^n \sum_{j=1}^n w_{ij} d^2(y_i^t,y_j^t)$, and $\mathrm{Var}(\{x_i^t\}_{i=1}^n)$.

\begin{lemma}\label{lemma:y_variance}
Let Assumptions~\ref{assum:Network-topology}, \ref{assum:manifold}, and \ref{assum:unbiased-estimator} hold. At iteration $t$,
\begin{align*}
    &\frac{1}{4n(1+C_3D^2)^2}\sum_{i=1}^n \sum_{j=1}^n w_{ij} d^2(y_i^t,y_j^t) \underset{\mathrm{(i)}}{\leq} \mathrm{Var}(\{ y_i^t \}_{i=1}^n), \\
    &  \frac{(1+C_4D^2)^2}{2n(1-\sigma_2(W))}
\sum_{i=1}^n\sum_{j=1}^n w_{ij} d^2(y_i^t,y_j^t) \underset{\mathrm{(ii)}}{\geq} \mathrm{Var}(\{ y_i^t \}_{i=1}^n).
\end{align*}
\end{lemma}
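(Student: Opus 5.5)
Both inequalities will come from pulling everything back to a single tangent space and using Lemma~\ref{lemma:bound-diff-log}. Write $\bar y:=\bar y^t$ for the Fréchet mean of $\{y_i^t\}$; it exists and lies in $\mathcal{X}$ by Remark~\ref{rem:Frechet-mean-exist}. Set $v_i:=\log_{\bar y}(y_i^t)\in T_{\bar y}\mathcal M$, so that $\|v_i\|=d(y_i^t,\bar y)$ and $\mathrm{Var}(\{y_i^t\})=\frac1n\sum_i\|v_i\|^2$. All points lie in $\mathcal{X}$, which has diameter at most $D$, so Lemma~\ref{lemma:bound-diff-log} with base point $x=\bar y$ gives
\[
(1+C_3D^2)^{-1}d(y_i^t,y_j^t)\le\|v_i-v_j\|\le(1+C_4D^2)\,d(y_i^t,y_j^t).
\]

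For (i), I will use the left inequality, $d^2(y_i^t,y_j^t)\le(1+C_3D^2)^2\|v_i-v_j\|^2$. Then $\|v_i-v_j\|^2\le 2\|v_i\|^2+2\|v_j\|^2$. Multiply by $w_{ij}$ and sum over $i,j$. Since $W$ is doubly stochastic, $\sum_{i,j}w_{ij}(\|v_i\|^2+\|v_j\|^2)=2\sum_i\|v_i\|^2=2n\,\mathrm{Var}$. This gives $\sum_{i,j}w_{ij}d^2\le 4n(1+C_3D^2)^2\mathrm{Var}$, which is (i). This direction is routine.

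For (ii), I will use the right inequality, $\|v_i-v_j\|^2\le(1+C_4D^2)^2d^2(y_i^t,y_j^t)$, so it suffices to show the Euclidean spectral-gap bound
\[
\sum_i\|v_i\|^2\le\frac{1}{2(1-\sigma_2(W))}\sum_{i,j}w_{ij}\|v_i-v_j\|^2 .
\]
Stack the $v_i$ into a matrix $V$ in coordinates of an orthonormal basis of $T_{\bar y}\mathcal M$. Symmetry and double stochasticity give $\sum_{i,j}w_{ij}\|v_i-v_j\|^2=2\,\mathrm{tr}(V^\top(I-W)V)$. Let $\hat v:=\frac1n\sum_iv_i$ and $\tilde V:=V-\mathbf 1\hat v^\top$. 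Because $(I-W)\mathbf 1=0$, we have $\mathrm{tr}(V^\top(I-W)V)=\mathrm{tr}(\tilde V^\top(I-W)\tilde V)$. The columns of $\tilde V$ are orthogonal to $\mathbf 1$, and on $\mathbf 1^\perp$ the eigenvalues of $I-W$ are at least $1-\sigma_2(W)$, since $W$ is symmetric and its eigenvalues other than $1$ have modulus at most $\sigma_2(W)$. Hence $\mathrm{tr}(V^\top(I-W)V)\ge(1-\sigma_2(W))\|\tilde V\|_F^2=(1-\sigma_2(W))\sum_i\|v_i-\hat v\|^2$.

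The main obstacle is the last step: replacing $\sum_i\|v_i-\hat v\|^2$ by $\sum_i\|v_i\|^2$, which requires $\hat v=0$. This is the first-order optimality condition of the Fréchet mean. The function $\varphi$ is differentiable at $\bar y$ because $D<\mathrm{inj}(\mathcal{X})$, and $\mathrm{grad}\,\varphi(\bar y)=-\frac2n\sum_i\log_{\bar y}(y_i^t)=-2\hat v$. I would argue that $\bar y$ is an interior critical point, i.e.\ that the minimizer over $\mathcal{X}$ is the unconstrained (local) Fréchet mean, which is what Remark~\ref{rem:Frechet-mean-exist} and the cited result provide. If only the constrained minimizer were available, the variational inequality $\langle\hat v,\log_{\bar y}(z)\rangle\le0$ for all $z\in\mathcal{X}$ would not give $\hat v=0$ directly, and I would need the interiority claim explicitly. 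With $\hat v=0$, combining the three displays gives
\[
n\,\mathrm{Var}\le\frac{(1+C_4D^2)^2}{2(1-\sigma_2(W))}\sum_{i,j}w_{ij}d^2(y_i^t,y_j^t),
\]
which is (ii).
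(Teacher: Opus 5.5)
Your proof is correct, and it differs from the paper's mainly in (ii), where the paper gives no argument of its own.

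\textbf{Part (i).} This is essentially the paper's argument. The paper expands $\sum_{i,j}w_{ij}\|v_i-v_j\|^2=2\sum_i\|v_i\|^2-2\,\mathrm{Tr}(VWV^\top)$ and invokes $\lambda_{\min}(W)\ge -1$. You instead use the pointwise bound $\|v_i-v_j\|^2\le 2\|v_i\|^2+2\|v_j\|^2$. Both give the factor $4$, and yours needs only nonnegativity and double stochasticity of $W$, not symmetry.

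\textbf{Part (ii).} The paper simply defers to \cite[Lemma III.1]{sahinoglu2025decentralized}. Your chain of steps makes the lemma self-contained and reproduces the stated constant exactly:
\begin{itemize}
\item the upper bound of Lemma~\ref{lemma:bound-diff-log} at base point $\bar y$;
\item the identity $\sum_{i,j}w_{ij}\|v_i-v_j\|^2=2\,\mathrm{tr}(V^\top(I-W)V)$;
\item the spectral gap of $I-W$ on $\mathbf 1^\perp$;
\item the condition $\hat v=0$.
\end{itemize}
The price is that you must justify $\hat v=0$.

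\textbf{The step you flag.} That step is easier than you suggest, and your remark that the constrained variational inequality ``would not give $\hat v=0$ directly'' is mistaken.
\begin{itemize}
\item Every $y_i^t$ lies in $\mathcal X$, so you may test $\langle\hat v,\log_{\bar y}(z)\rangle\le 0$ at $z=y_i^t$. This gives $\langle\hat v,v_i\rangle\le 0$ for each $i$.
\item Averaging over $i$ gives $\|\hat v\|^2\le 0$, hence $\hat v=0$.
\item The variational inequality itself holds because $\mathcal X$ is geodesically convex: the geodesic from $\bar y$ to $z$ stays in $\mathcal X$, so the one-sided derivative of $\varphi$ along it at $\bar y$ is nonnegative.
\item Differentiability is also fine: $d^2(\cdot,y_i^t)$ is smooth near $\bar y$ because $d(\bar y,y_i^t)\le D<\mathrm{inj}(\mathcal X)$.
\end{itemize}
So $\hat v=0$ holds for the minimizer over $\mathcal X$, which is exactly how the paper defines the Fr\'echet mean. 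No interiority or unconstrained-critical-point claim is needed, and Remark~\ref{rem:Frechet-mean-exist} is used only to get existence and $\bar y\in\mathcal X$.
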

\begin{proof}
We first show inequality (i). Applying Lemma~\ref{lemma:bound-diff-log} for each $y_i^t, y_j^t$, and $\bar{y}^t$ in $\mathcal{X}$, we have
$$(1+C_3D^2)^{-2}d^2(y_i^t,y_j^t) \leq \lVert \log_{\bar{y}^t}(y_i^t) - \log_{\bar{y}^t}(y_j^t) \rVert^2.$$
Since $W$ is doubly stochastic, by summing all distances of pairs, we have
\begin{align*}
   &(1+C_3D^2)^{-2}\sum_{i=1}^n \sum_{j=1}^n w_{ij}d^2(y_i^t,y_j^t) \\
   &\leq 2\sum_{i=1}^n \lVert \log_{\bar{y}^t}(y_i^t) \rVert^2 - 2\sum_{i=1}^n\sum_{j=1}^n w_{ij} \langle \log_{\bar{y}^t}(y_i^t), \log_{\bar{y}^t}(y_j^t) \rangle \\
   &= 2\sum_{i=1}^n d^2(\bar{y}^t,y_i^t) - 2\sum_{i=1}^n\sum_{j=1}^n w_{ij} \langle \log_{\bar{y}^t}(y_i^t), \log_{\bar{y}^t}(y_j^t) \rangle.
\end{align*}
By fixing an orthonormal basis for $T_{\bar{y}^t}\mathcal{M}$, we can isometrically map the tangent vectors $\log_{\bar{y}^t}(y_i^t)$ to $\mathbb{R}^d$ and stack their coordinates as columns of a matrix $V \in \mathbb{R}^{d \times n}$. Since $W$ is symmetric and doubly stochastic ($\lambda_{\min}(W) \geq -1$), we have
\begin{equation*}
\begin{aligned}
    & \sum_{i=1}^n \sum_{j=1}^n w_{ij} \langle \log_{\bar{y}^t}(y_i^t), \log_{\bar{y}^t}(y_j^t) \rangle = \text{Tr}(V W V^\top) \\
    &\geq \lambda_{\min}(W) \text{Tr}(V V^\top) \geq -1 \cdot \text{Tr}(V V^\top) = - \sum_{i=1}^n d^2(\bar{y}^t, y_i^t).
\end{aligned}
\end{equation*}
\vspace{-0.3cm}
Thus, we have
\begin{align*}
   &(1+C_3D^2)^{-2}\sum_{i=1}^n \sum_{j=1}^n w_{ij}d^2(y_i^t,y_j^t) \\
   &\leq 2\sum_{i=1}^n d^2(\bar{y}^t,y_i^t) - 2\sum_{i=1}^n\sum_{j=1}^n w_{ij} \langle \log_{\bar{y}^t}(y_i^t), \log_{\bar{y}^t}(y_j^t) \rangle \\
   &\leq 2\sum_{i=1}^n d^2(\bar{y}^t,y_i^t) + 2\sum_{i=1}^n d^2(\bar{y}^t,y_i^t) = 4\sum_{i=1}^n d^2(\bar{y}^t,y_i^t).
\end{align*}
Then, we have inequality (i). Moreover, the proof of inequality (ii) is shown in \cite[Lemma III.1]{sahinoglu2025decentralized}.

\end{proof}

\begin{prop}[\protect{\cite[Theorem III.2]{sahinoglu2025decentralized}}] \label{prop:var_x_var_y_alg_1}
Let Assumptions~\ref{assum:Network-topology}, \ref{assum:manifold}, and \ref{assum:unbiased-estimator} hold. Consider the consensus step \eqref{eqn:consensus-step}. By selecting the step size $s = (2C_1)^{-1}C_2$, we have
$$\mathrm{Var}(\{ x_i^{t+1} \}_{i=1}^n) \leq \rho_1 \mathrm{Var}(\{ y_i^{t+1} \}_{i=1}^n),$$
where $\rho_1 = 1 - {C_2^{3}(1-\sigma_2(W))}/{(4C_1(1+C_4D^2)^2)}$.
\end{prop}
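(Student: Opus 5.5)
The statement is Proposition~\ref{prop:var_x_var_y_alg_1}, a one-step contraction of the Fréchet variance under the consensus map \eqref{eqn:consensus-step}. I will use $\bar{y}:=\bar{y}^{t+1}$ as a comparison point, write $v_i:=\sum_j w_{ij}\log_{y_i}(y_j)$ with $y_i:=y_i^{t+1}$, and drop the superscript $t+1$. Since $\bar{x}^{t+1}$ minimizes $\varphi$ for the points $\{x_i\}$, we have $\mathrm{Var}(\{x_i\})\le \frac1n\sum_i d^2(x_i,\bar y)$. It therefore suffices to bound $\sum_i d^2(x_i,\bar y)$.

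For each $i$, I apply the upper inequality of Lemma~\ref{lemma:cosine-law} with $a=x_i$, $b=y_i$, $c=\bar y$. Because $\log_{y_i}(x_i)=s v_i$, this gives
\begin{equation*}
d^2(x_i,\bar y)\le C_1 s^2\|v_i\|^2 + d^2(y_i,\bar y) - 2s\langle v_i,\log_{y_i}(\bar y)\rangle .
\end{equation*}
The cross term $\langle \log_{y_i}(y_j),\log_{y_i}(\bar y)\rangle$ is handled with the lower inequality of Lemma~\ref{lemma:cosine-law}, taking $a=y_j$, $b=y_i$, $c=\bar y$:
\begin{equation*}
2\langle \log_{y_i}(y_j),\log_{y_i}(\bar y)\rangle \ge C_2 d^2(y_i,\bar y)+d^2(y_i,y_j)-d^2(y_j,\bar y).
\end{equation*}
Multiplying by $w_{ij}$ and summing over $i,j$, double stochasticity makes $\sum_{ij}w_{ij}d^2(y_j,\bar y)=\sum_i d^2(y_i,\bar y)$. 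The cross-term sum is then at least $C_2\sum_i d^2(y_i,\bar y)+\sum_{ij}w_{ij}d^2(y_i,y_j)-\sum_i d^2(y_i,\bar y)$. The mismatch between $C_2$ and $1$ is awkward, so I will instead apply the inequality in the symmetric combination, or keep $C_2$ and accept a weaker constant. The clean version I aim for is
\begin{equation*}
\sum_{i,j} w_{ij}\langle \log_{y_i}(y_j),\log_{y_i}(\bar y)\rangle\ \ge\ \tfrac{C_2}{2}\sum_{i,j}w_{ij}d^2(y_i,y_j).
\end{equation*}
To get it, I will use the lower bound with $c=\bar y$, pair the $(i,j)$ and $(j,i)$ terms, and reduce the $d^2(\cdot,\bar y)$ terms with a factor $C_2\le1$.

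Next I bound $\|v_i\|^2\le\sum_j w_{ij}d^2(y_i,y_j)$ by Jensen. Summing over $i$ then gives
\begin{equation*}
\sum_i d^2(x_i,\bar y)\le \sum_i d^2(y_i,\bar y)-\bigl(sC_2-C_1s^2\bigr)\sum_{i,j}w_{ij}d^2(y_i,y_j).
\end{equation*}
With $s=C_2/(2C_1)$, the bracket equals $C_2^2/(4C_1)$. Applying Lemma~\ref{lemma:y_variance}(ii) converts $\sum_{ij}w_{ij}d^2(y_i,y_j)$ into at least $2n(1-\sigma_2(W))(1+C_4D^2)^{-2}\mathrm{Var}(\{y_i\})$. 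This yields $\mathrm{Var}(\{x_i\})\le\bigl(1-\frac{C_2^2(1-\sigma_2(W))}{2C_1(1+C_4D^2)^2}\bigr)\mathrm{Var}(\{y_i\})$. That is at least as strong as the claimed $\rho_1$, since $C_2\le1$, so any loss of a factor $C_2/2$ in the cross-term step is still absorbed.

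The main obstacle is the cross-term lower bound. On positively curved spaces the comparison with $\bar y$ introduces $C_2 d^2(y_i,\bar y)$ rather than $d^2(y_i,\bar y)$, and the leftover $(1-C_2)\sum_i d^2(y_i,\bar y)$ must be controlled. I would first try to exploit the first-order optimality of $\bar y$, namely $\sum_i\log_{\bar y}(y_i)=0$, together with Lemma~\ref{lemma:bound-diff-log} to transfer inner products between tangent spaces. If that fails, I would accept the extra factor $C_2$, which is exactly what produces $C_2^3$ in $\rho_1$.
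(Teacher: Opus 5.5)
There is a genuine gap in the cross-term step. Your overall chain is sound: comparing $\mathrm{Var}(\{x_i\})$ with $\frac1n\sum_i d^2(x_i,\bar y)$, Jensen on $\|v_i\|^2$, $s=C_2/(2C_1)$, then Lemma~\ref{lemma:y_variance}(ii). But the cross-term inequality you call the clean version is not proved by what you propose. The ``obstacle'' you describe comes from your choice of roles in Lemma~\ref{lemma:cosine-law}.

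With $a=y_j$, $b=y_i$, $c=\bar y$, the constant $C_2$ sits on $d^2(y_i,\bar y)$, and summing leaves
$2\sum_{i,j}w_{ij}\langle \log_{y_i}(y_j),\log_{y_i}(\bar y)\rangle \ge \sum_{i,j}w_{ij}d^2(y_i,y_j)-(1-C_2)\sum_i d^2(y_i,\bar y)$.
Neither remedy removes this additive leftover.
\begin{itemize}
\item Pairing $(i,j)$ with $(j,i)$ would need $2d^2(y_i,y_j)\ge d^2(y_i,\bar y)+d^2(y_j,\bar y)$, which fails when $y_i=y_j\ne\bar y$.
\item In aggregate you would need $\sum_{i,j}w_{ij}d^2(y_i,y_j)\ge n\,\mathrm{Var}(\{y_i\})$, which Lemma~\ref{lemma:y_variance}(ii) does not give. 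When $C_2<1$ and $\sigma_2(W)\to 1$, the resulting coefficient tends to $1+s(1-C_2)>1$, so there is no contraction at all.
\item ``Accepting an extra factor $C_2$'' is not an actual manipulation that turns this additive error into a multiplicative constant.
\end{itemize}

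The missing observation is that Lemma~\ref{lemma:cosine-law} is symmetric in $a$ and $c$: both $d^2(a,c)$ and the inner product are. So you may put the curvature constant on whichever side adjacent to $b$ you like. Take $a=\bar y$, $b=y_i$, $c=y_j$:
\[
2\langle \log_{y_i}(y_j),\log_{y_i}(\bar y)\rangle \ge C_2\, d^2(y_i,y_j)+d^2(y_i,\bar y)-d^2(y_j,\bar y).
\]
Weight by $w_{ij}$ and sum. Unit row and column sums make the two $\bar y$-terms cancel exactly, which gives your clean inequality. You need neither $\sum_i\log_{\bar y}(y_i)=0$ nor Lemma~\ref{lemma:bound-diff-log}.

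The paper itself only cites \cite[Theorem III.2]{sahinoglu2025decentralized} for this proposition. However, this exact step, with $x_\ast$ in place of $\bar y$, appears in its proof of Theorem~\ref{thm:function-error}.

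Your first display does hold, but it comes from $a=\bar y$, $b=y_i$, $c=x_i$, not from the labels you state. With your labels, $C_1$ would multiply $d^2(y_i,\bar y)$, and the contraction would again be lost in general.

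With both assignments fixed, your argument gives
$\mathrm{Var}(\{x_i\})\le\bigl(1-\frac{C_2^2(1-\sigma_2(W))}{2C_1(1+C_4D^2)^2}\bigr)\mathrm{Var}(\{y_i\})\le\rho_1\,\mathrm{Var}(\{y_i\})$,
since $C_2\le 1$. So the third power of $C_2$ in $\rho_1$ is slack, not something your argument needs to reproduce.
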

Now, we present the proof for the consensus convergence.
\begin{proof}[\textbf{Proof for Theorem~\ref{thm:stochastic-consensus}}]
Let \(\hat g_i^t := \widehat{\mathrm{grad}} f_i(x_i^t)\) and
\(g_i^t := \mathrm{grad} f_i(x_i^t)\). Then,
\begin{align}
&\mathbb{E}[d^2(x_i^t,y_i^{t+1})]
= \mathbb{E}\left[\|\log_{x_i^t}(y_i^{t+1})\|^2\right] \notag= \eta_t^2 \mathbb{E}\left[\|\hat g_i^t\|^2\right] \notag\\
&\le 2\eta_t^2 \mathbb{E}\left[\|\hat g_i^t-g_i^t\|^2+\|g_i^t\|^2\right] \notag\\
&\le 2\eta_t^2 \mathbb{E}\left[
\mathbb{E}\left[\|\hat g_i^t{-}g_i^t\|^2 \mid \mathcal{F}_t\right]{+}\|g_i^t\|^2
\right] {\le} 2\eta_t^2(\sigma^2{+}\delta^2).
\label{ineq:stochastic-bounded-gradient}
\end{align}

Next, applying Lemma~\ref{lemma:cosine-law} with
\(a=\bar{x}^t\), \(b=x_i^t\), and \(c=y_i^{t+1}\), we obtain
\begin{align*}
&d^2(\bar{x}^t,y_i^{t+1})
\le C_1 d^2(x_i^t,y_i^{t+1}) + d^2(\bar{x}^t,x_i^t) \\
&\quad - 2\langle \log_{x_i^t}(\bar{x}^t),\log_{x_i^t}(y_i^{t+1}) \rangle \\
&= C_1 d^2(x_i^t,y_i^{t+1}) + d^2(\bar{x}^t,x_i^t)  + 2\eta_t \langle \log_{x_i^t}(\bar{x}^t), \hat g_i^t \rangle.
\end{align*}
Taking expectations, using that \(\log_{x_i^t}(\bar{x}^t)\) is
\(\mathcal{F}_t\)-measurable, and applying Young's inequality
\(2\langle a,b\rangle \le \xi^{-1}\|a\|^2+\xi\|b\|^2\),
\begin{align}
&\mathbb{E}[d^2(\bar{x}^t,y_i^{t+1})]
\le C_1 \mathbb{E}[d^2(x_i^t,y_i^{t+1})]
+ \mathbb{E}[d^2(\bar{x}^t,x_i^t)] \notag\\
&\quad + 2\eta_t \mathbb{E}\left[
\left\langle
\mathbb{E}[\hat g_i^t \mid \mathcal{F}_t],
\log_{x_i^t}(\bar{x}^t)
\right\rangle
\right] \notag\\
&= C_1 \mathbb{E}[d^2(x_i^t,y_i^{t+1})]
+ \mathbb{E}[d^2(\bar{x}^t,x_i^t)] \notag\\
&\quad + 2\eta_t \mathbb{E}\left[
\langle g_i^t,\log_{x_i^t}(\bar{x}^t)\rangle
\right] \notag\\
&\le C_1 \mathbb{E}[d^2(x_i^t,y_i^{t+1})]
+ (1+\xi)\mathbb{E}[d^2(x_i^t,\bar{x}^t)] \notag+ \eta_t^2 \xi^{-1}\delta^2 \notag\\
&\le (1+\xi)\mathbb{E}[d^2(x_i^t,\bar{x}^t)]
+ \eta_t^2\bigl(2C_1(\sigma^2+\delta^2)+\xi^{-1}\delta^2\bigr).
\label{ineq:unbiased-grad-product}
\end{align}

Now let
\(A_t:=\sum_{i=1}^n \mathbb{E}[d^2(x_i^t,\bar{x}^t)]\).
Using Proposition~\ref{prop:var_x_var_y_alg_1}, the Fr\'echet-mean property
of \(\bar{y}^{t+1}\), and \eqref{ineq:unbiased-grad-product}, we have
\begin{align}
A_{t+1}
&= \sum_{i=1}^n \mathbb{E}[d^2(x_i^{t+1},\bar{x}^{t+1})] \notag\le \rho_1 \sum_{i=1}^n \mathbb{E}[d^2(y_i^{t+1},\bar{y}^{t+1})] \notag\\
&\le \rho_1 \sum_{i=1}^n \mathbb{E}[d^2(y_i^{t+1},\bar{x}^{t})] \notag\\
&\le \rho_1(1+\xi)A_t
+ \rho_1 \eta_t^2 n\bigl(2C_1(\sigma^2+\delta^2)+\xi^{-1}\delta^2\bigr)
\notag\\
&= \rho_2 A_t + \eta_t^2 n B = \rho_2 A_t + {\eta_0^2 n B}/{t},
\label{ineq:stochastic-consensus-inequality}
\end{align}
where \(\rho_1=1-\xi\), \(\rho_2=(1-\xi)(1+\xi)=1-\xi^2\), and
\(B=(1-\xi)\bigl(2C_1(\sigma^2+\delta^2)+\xi^{-1}\delta^2\bigr)\).
Since \(0<\xi<1\), we have \(0<\rho_2<1\). By the common initialization,
\(A_1=0\). Unrolling \eqref{ineq:stochastic-consensus-inequality} gives
\begin{align*}
A_t
&\le \eta_0^2 nB \sum_{k=1}^{t-1} \rho_2^{\,t-1-k}\frac{1}{k}
= \frac{\eta_0^2 nB}{t}\sum_{k=1}^{t-1}\rho_2^{\,t-1-k}\frac{t}{k}.
\end{align*}
For \(1\le k\le t-1\), the inequality \((t-k)(k-1)\ge 0\) implies
\(t\le k(t-k+1)\), hence \(t/k \le t-k+1\). Therefore, with the change of
index \(j=t-1-k\),
\begin{align*}
A_t
&\le \frac{\eta_0^2 nB}{t}\sum_{k=1}^{t-1}\rho_2^{\,t-1-k}(t-k+1) \\
&= \frac{\eta_0^2 nB}{t}\sum_{j=0}^{t-2}\rho_2^{\,j}(j+2)\le \frac{\eta_0^2 nB}{t}
\left(\sum_{j=0}^{\infty} j\rho_2^{\,j} + 2\sum_{j=0}^{\infty}\rho_2^{\,j}\right) \\
&= \frac{\eta_0^2 nB}{t}
\left(\frac{\rho_2}{(1-\rho_2)^2}+\frac{2}{1-\rho_2}\right).
\end{align*}
Since \(1-\rho_2=\xi^2\), the term in parentheses equals
\((1+\xi^2)/\xi^4 = C(\xi)\). Hence $A_t
\le  {\eta_0^2 C(\xi)\,nB}/{t}$.

\end{proof}


Finally, we present the proof for Theorem~\ref{thm:function-error}.

\begin{proof}[\textbf{Proof for Theorem~\ref{thm:function-error}}]
Applying Lemma~\ref{lemma:cosine-law} with
\(a=x_\ast\), \(b=x_i^t\), and \(c=y_i^{t+1}\), we get
\begin{align*}
d^2(y_i^{t+1},x_\ast)
&\le C_1 d^2(x_i^t,y_i^{t+1}) + d^2(x_i^t,x_\ast) \\
&\quad - 2\langle \log_{x_i^t}(y_i^{t+1}),\log_{x_i^t}(x_\ast)\rangle.
\end{align*}
Rearranging and using
\(\log_{x_i^t}(y_i^{t+1})=-\eta_t \hat g_i^t\),
\begin{align*}
-\eta_t \langle \hat g_i^t,\log_{x_i^t}(x_\ast)\rangle
&\le \frac{1}{2}\Bigl(d^2(x_i^t,x_\ast)-d^2(y_i^{t+1},x_\ast)\Bigr) \\
&\quad + \frac{C_1}{2}d^2(x_i^t,y_i^{t+1}).
\end{align*}
Taking expectations and using \eqref{ineq:stochastic-bounded-gradient},
\begin{align}
-\eta_t \mathbb{E}\left[
\langle g_i^t,\log_{x_i^t}(x_\ast)\rangle
\right]
&\le \frac{1}{2}\Bigl(
\mathbb{E}[d^2(x_i^t,x_\ast)]
-\mathbb{E}[d^2(y_i^{t+1},x_\ast)]
\Bigr) \notag\\
&\quad + C_1\eta_t^2(\sigma^2+\delta^2).
\label{ineq:stochastic-thm-convex-1}
\end{align}
Since \(f_i\) is geodesically convex on \(\mathcal{X}\),
\begin{align*}
f_i(x_\ast)
\ge f_i(x_i^t)+\langle g_i^t,\log_{x_i^t}(x_\ast)\rangle,
\end{align*}
hence $-\eta_t \langle g_i^t,\log_{x_i^t}(x_\ast)\rangle
\ge \eta_t\bigl(f_i(x_i^t)-f_i(x_\ast)\bigr)$.
Combining this with \eqref{ineq:stochastic-thm-convex-1} and summing over
\(i=1,\dots,n\), we obtain
\begin{align}
&\eta_t \mathbb{E}[f(\mathbf{x}^t)-f(\mathbf{x}_\ast)] \le  C_1\eta_t^2(\sigma^2+\delta^2) + \\ &\frac{1}{2n}\sum_{i=1}^n
\Bigl(
\mathbb{E}[d^2(x_i^t,x_\ast)]
-\mathbb{E}[d^2(y_i^{t+1},x_\ast)]
\Bigr)  .
\label{ineq:stochastic-thm-convex-3}
\end{align}

Next, we analyze the consensus step. Applying Lemma~\ref{lemma:cosine-law} with
\(a=x_\ast\), \(b=y_i^{t+1}\), and \(c=x_i^{t+1}\), we get
\begin{align*}
&s\left\langle
\sum_{j=1}^n w_{ij}\log_{y_i^{t+1}}(y_j^{t+1}),
\log_{y_i^{t+1}}(x_\ast)
\right\rangle \\
&\le \frac{1}{2}\Bigl(
d^2(y_i^{t+1},x_\ast)-d^2(x_i^{t+1},x_\ast)
\Bigr) \\
&\quad + \frac{C_1}{2}s^2
\left\|\sum_{j=1}^n w_{ij}\log_{y_i^{t+1}}(y_j^{t+1})\right\|^2.
\end{align*}
Using Jensen's inequality on \(T_{y_i^{t+1}}\mathcal{M}\),
\begin{align*}
\left\|\sum_{j=1}^n w_{ij}\log_{y_i^{t+1}}(y_j^{t+1})\right\|^2
\le \sum_{j=1}^n w_{ij} d^2(y_i^{t+1},y_j^{t+1}),
\end{align*}
and hence
\begin{align}
&s\sum_{i=1}^n\sum_{j=1}^n w_{ij}
\left\langle
\log_{y_i^{t+1}}(y_j^{t+1}),
\log_{y_i^{t+1}}(x_\ast)
\right\rangle \notag\\
&\le \frac{1}{2}\Bigl(
\sum_{i=1}^n d^2(y_i^{t+1},x_\ast)
-\sum_{i=1}^n d^2(x_i^{t+1},x_\ast)
\Bigr) \notag\\
&\quad + \frac{C_1}{2}s^2
\sum_{i=1}^n\sum_{j=1}^n w_{ij} d^2(y_i^{t+1},y_j^{t+1}).
\label{ineq:stochastic-thm-convex-4}
\end{align}
On the other hand, applying the lower bound in
Lemma~\ref{lemma:cosine-law} with
\(a=x_\ast\), \(b=y_i^{t+1}\), and \(c=y_j^{t+1}\), we get
\begin{align*}
&\left\langle
\log_{y_i^{t+1}}(y_j^{t+1}),
\log_{y_i^{t+1}}(x_\ast)
\right\rangle
\ge \\ & \frac{1}{2}\Bigl(
d^2(y_i^{t+1},x_\ast)-d^2(y_j^{t+1},x_\ast)
\Bigr)  + \frac{C_2}{2}d^2(y_i^{t+1},y_j^{t+1}).
\end{align*}
Multiplying by \(w_{ij}\), summing over \(i,j\), and using that \(W\) is
doubly stochastic, the distance-to-\(x_\ast\) terms cancel:
\begin{align*}
&\sum_{i=1}^n\sum_{j=1}^n w_{ij}
\left\langle
\log_{y_i^{t+1}}(y_j^{t+1}),
\log_{y_i^{t+1}}(x_\ast)
\right\rangle \\
&\ge \frac{C_2}{2}
\sum_{i=1}^n\sum_{j=1}^n w_{ij} d^2(y_i^{t+1},y_j^{t+1}).
\end{align*}
Combining this with \eqref{ineq:stochastic-thm-convex-4} yields
\begin{align*}
&\frac{1}{2}\Bigl(
\sum_{i=1}^n d^2(y_i^{t+1},x_\ast)
-\sum_{i=1}^n d^2(x_i^{t+1},x_\ast)
\Bigr) \\
&\ge \frac{C_2s-C_1s^2}{2}
\sum_{i=1}^n\sum_{j=1}^n w_{ij} d^2(y_i^{t+1},y_j^{t+1}).
\end{align*}
Since \(s=C_2/(2C_1)\), we have
\(C_2s-C_1s^2=C_2^2/(4C_1)>0\). Therefore,
\begin{align}
\frac{1}{2n}\sum_{i=1}^n
\Bigl(
\mathbb{E}[d^2(y_i^{t+1},x_\ast)]
-\mathbb{E}[d^2(x_i^{t+1},x_\ast)]
\Bigr)
\ge 0.
\label{ineq:stochastic-thm-convex-5}
\end{align}
Plugging \eqref{ineq:stochastic-thm-convex-5} into
\eqref{ineq:stochastic-thm-convex-3}, we obtain
\begin{align*}
&\eta_t \mathbb{E}[f(\mathbf{x}^t)-f(\mathbf{x}_\ast)]
\le  \\ & \frac{1}{2n}\sum_{i=1}^n
\Bigl(
\mathbb{E}[d^2(x_i^t,x_\ast)]
-\mathbb{E}[d^2(x_i^{t+1},x_\ast)]
\Bigr) + C_1\eta_t^2(\sigma^2+\delta^2).
\end{align*}
Summing from \(t=1\) to \(T\), using
\(\sum_{t=1}^T \eta_t^2 = \eta_0^2\sum_{t=1}^T t^{-1}
\le \eta_0^2(1+\log T)\), gives
\begin{align*}
&\sum_{t=1}^T \eta_t \mathbb{E}[f(\mathbf{x}^t)-f(\mathbf{x}_\ast)] \le  C_1(\sigma^2+\delta^2)\sum_{t=1}^T \eta_t^2  \\
& +\frac{1}{2n}\sum_{i=1}^n
\Bigl(
\mathbb{E}[d^2(x_i^1,x_\ast)]
-\mathbb{E}[d^2(x_i^{T+1},x_\ast)]
\Bigr) \\
&\le \frac{1}{2n}\sum_{i=1}^n \mathbb{E}[d^2(x_i^1,x_\ast)]
+ \eta_0^2 C_1(\sigma^2+\delta^2)(1+\log T).
\end{align*}
Also,
\(\sum_{t=1}^T \eta_t = \eta_0\sum_{t=1}^T t^{-1/2}
\ge \eta_0\sqrt{T}\), and thus
\begin{align}
\frac{\sum_{t=1}^T \eta_t \mathbb{E}[f(\mathbf{x}^t)-f(\mathbf{x}_\ast)]}
{\sum_{t=1}^T \eta_t}
&\le \frac{1}{2n\eta_0\sqrt{T}}
\sum_{i=1}^n \mathbb{E}[d^2(x_i^1,x_\ast)] \notag\\
&\quad + \eta_0 C_1(\sigma^2+\delta^2)\frac{1+\log T}{\sqrt{T}}.
\label{ineq:fxt-fxast}
\end{align}

It remains to pass from \(\mathbf{x}^t\) to \(\bar{\mathbf{x}}^t\). Since each
\(f_i\) is \(\delta\)-Lipschitz,
\begin{align*}
f_i(\bar{x}^t)-f_i(x_\ast)
&= f_i(\bar{x}^t)-f_i(x_i^t)+f_i(x_i^t)-f_i(x_\ast) \\
&\le \delta\, d(\bar{x}^t,x_i^t)+f_i(x_i^t)-f_i(x_\ast).
\end{align*}
Summing over \(i\) and using Cauchy--Schwarz,
\begin{align*}
&f(\bar{\mathbf{x}}^t)-f(\mathbf{x}_\ast)
\le \frac{\delta}{n}\sum_{i=1}^n d(\bar{x}^t,x_i^t)
+ f(\mathbf{x}^t)-f(\mathbf{x}_\ast) \\
&\le \frac{\delta}{\sqrt{n}}
\left(\sum_{i=1}^n d^2(\bar{x}^t,x_i^t)\right)^{1/2}
+ f(\mathbf{x}^t)-f(\mathbf{x}_\ast).
\end{align*}
Taking expectations, using Jensen's inequality for the concave map
\(u\mapsto \sqrt{u}\), and applying Theorem~\ref{thm:stochastic-consensus},
we get
\begin{align}
&\mathbb{E}[f(\bar{\mathbf{x}}^t)-f(\mathbf{x}_\ast)]
\le \frac{\delta}{\sqrt{n}}
\left(
\mathbb{E}\left[\sum_{i=1}^n d^2(\bar{x}^t,x_i^t)\right]
\right)^{1/2} \\
& \qquad + \mathbb{E}[f(\mathbf{x}^t)-f(\mathbf{x}_\ast)] \notag\\
&\le \eta_0 \delta \sqrt{C(\xi)B}\,\frac{1}{\sqrt{t}}
+ \mathbb{E}[f(\mathbf{x}^t)-f(\mathbf{x}_\ast)].
\label{ineq:fbar-fxast}
\end{align}
Multiplying \eqref{ineq:fbar-fxast} by
\(\eta_t=\eta_0/\sqrt{t}\), summing from \(t=1\) to \(T\), dividing by
\(\sum_{t=1}^T \eta_t\), and using
\(\sum_{t=1}^T t^{-1}\le 1+\log T\) together with
\(\sum_{t=1}^T \eta_t\ge \eta_0\sqrt{T}\), we conclude
\begin{align*}
&\frac{\sum_{t=1}^T \eta_t \mathbb{E}[f(\bar{\mathbf{x}}^t)-f(\mathbf{x}_\ast)]}
{\sum_{t=1}^T \eta_t} \\
&\le
\frac{\eta_0^2 \delta \sqrt{C(\xi)B}\sum_{t=1}^T t^{-1}}
{\sum_{t=1}^T \eta_t}
+
\frac{\sum_{t=1}^T \eta_t \mathbb{E}[f(\mathbf{x}^t)-f(\mathbf{x}_\ast)]}
{\sum_{t=1}^T \eta_t} \\
&\le
\eta_0 \delta \sqrt{C(\xi)B}\,\frac{1+\log T}{\sqrt{T}}
+
\frac{1}{2n\eta_0\sqrt{T}}
\sum_{i=1}^n \mathbb{E}[d^2(x_i^1,x_\ast)] \\
&\quad +
\eta_0 C_1(\sigma^2+\delta^2)\frac{1+\log T}{\sqrt{T}},
\end{align*}
which proves the theorem.

\end{proof}
\vspace{-0.5cm}
\section{Numerical Analysis}\label{sec:numerics}
In this section, we evaluate the performance of the proposed (referred to as \texttt{Diffusion\_Dim}) on the task of Distributed Principal Component Analysis (PCA) \cite{wang2025riemannian}. We compare our approach against existing baselines, including the standard Riemannian Diffusion Adaptation (\texttt{Diffusion}) \cite{wang2025riemannian} and Decentralized Riemannian Stochastic Gradient Descent (\texttt{DRSGD}) \cite{chen2021decentralized}. 
\begin{figure}[ht!]
    \centering
\begin{subfigure}{.48\linewidth}
  \centering
  \includegraphics[width=.93\linewidth]{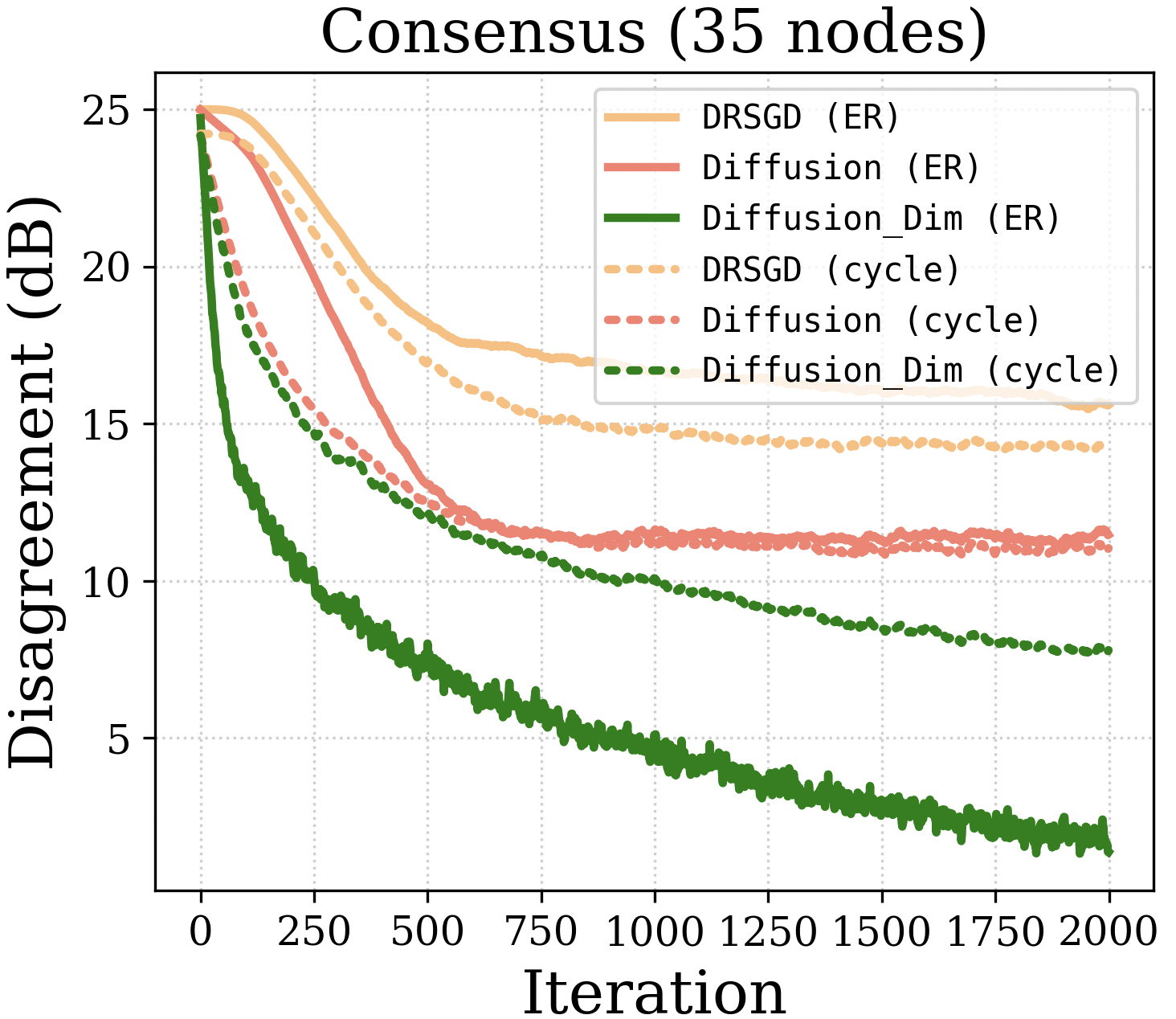}
  \label{fig:35-consensus}
\end{subfigure}%
\vspace{0.2cm}
\begin{subfigure}{.48\linewidth}
  \centering
  \includegraphics[width=.93\linewidth]{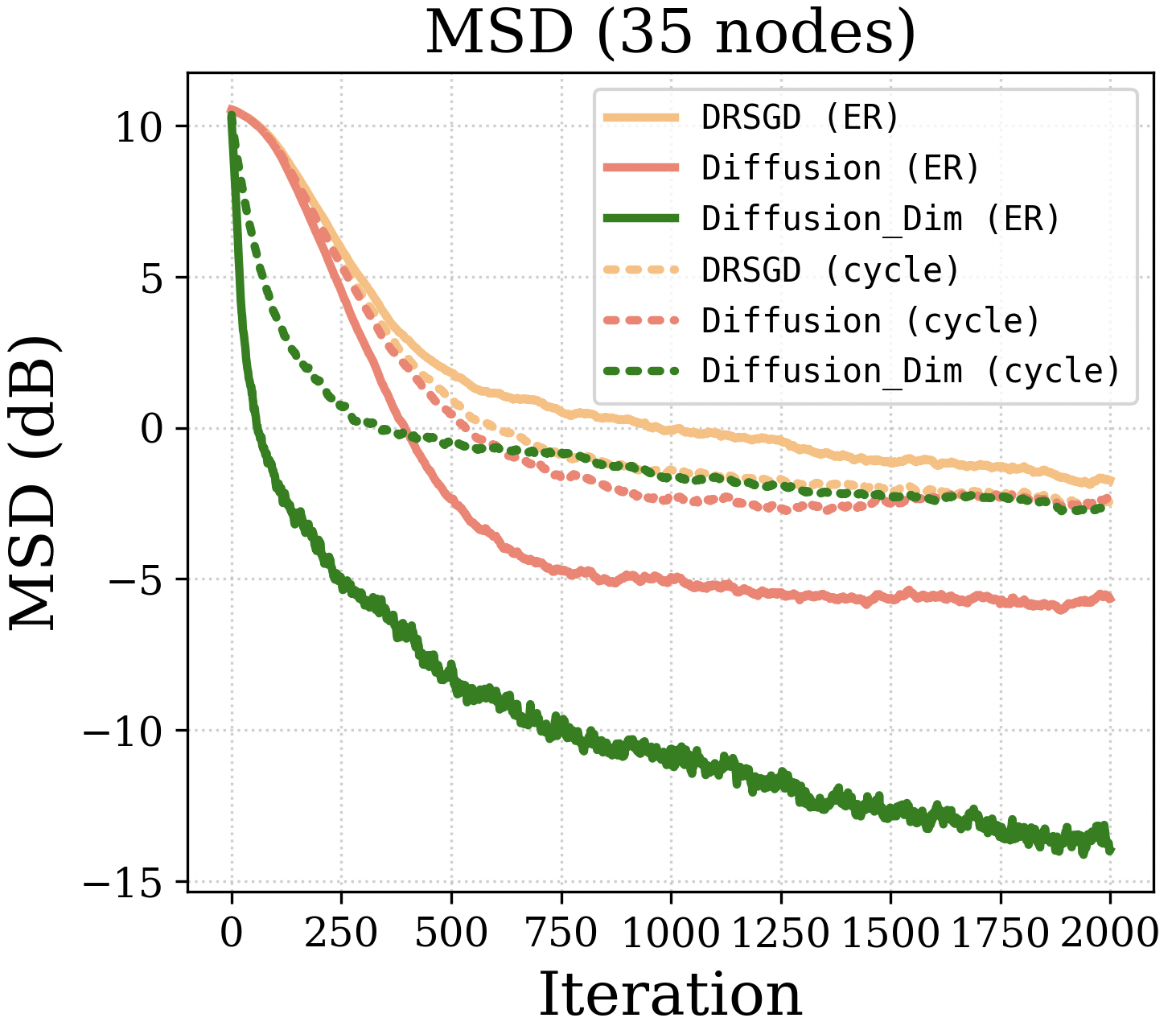}
  \label{fig:35-MSD}
\end{subfigure}\\
\begin{subfigure}{.48\linewidth}
  \centering
  \includegraphics[width=.93\linewidth]{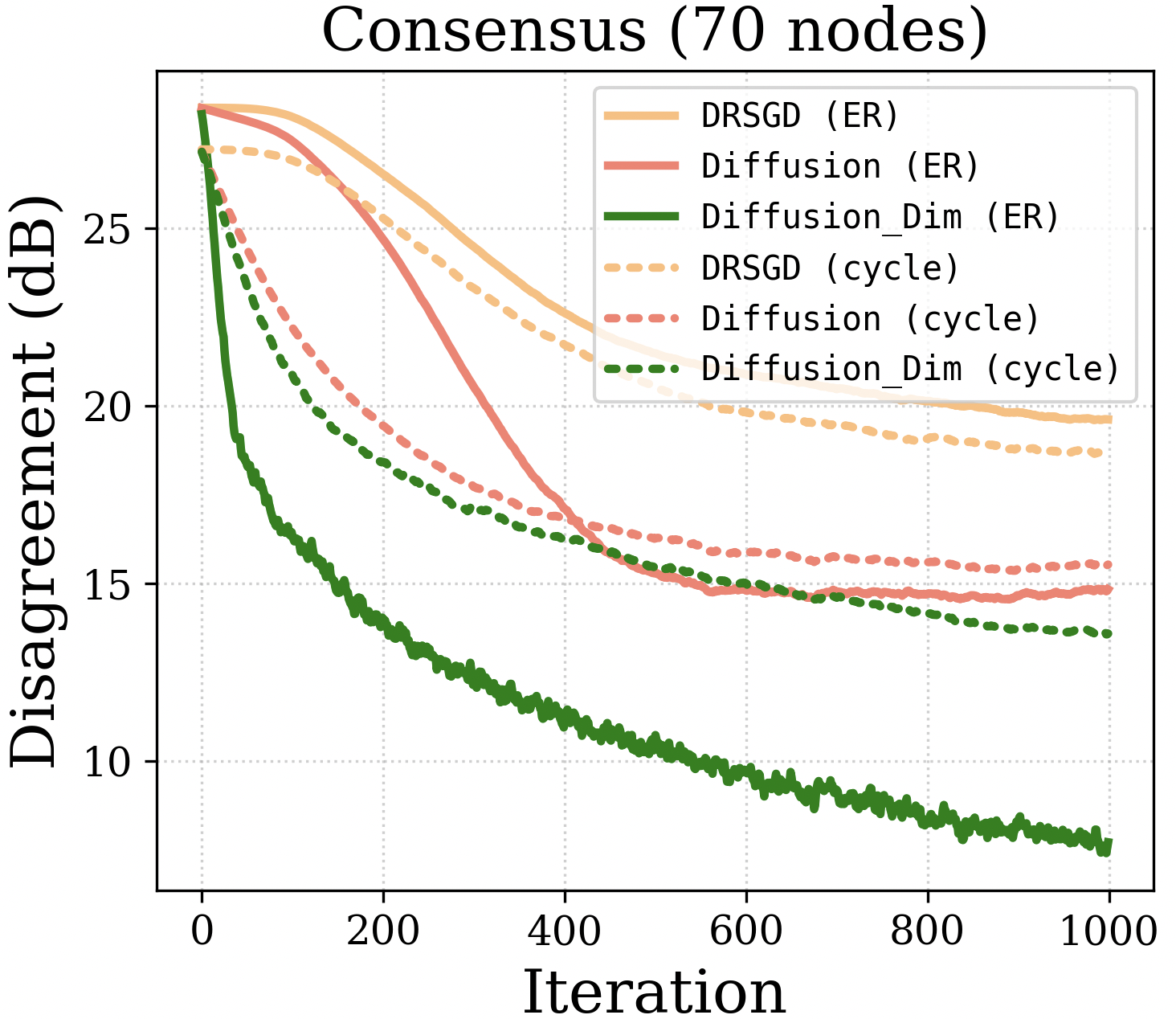}
  \label{fig:70-consensus}
\end{subfigure}%
\vspace{0.2cm}
\begin{subfigure}{.48\linewidth}
  \centering
  \includegraphics[width=.93\linewidth]{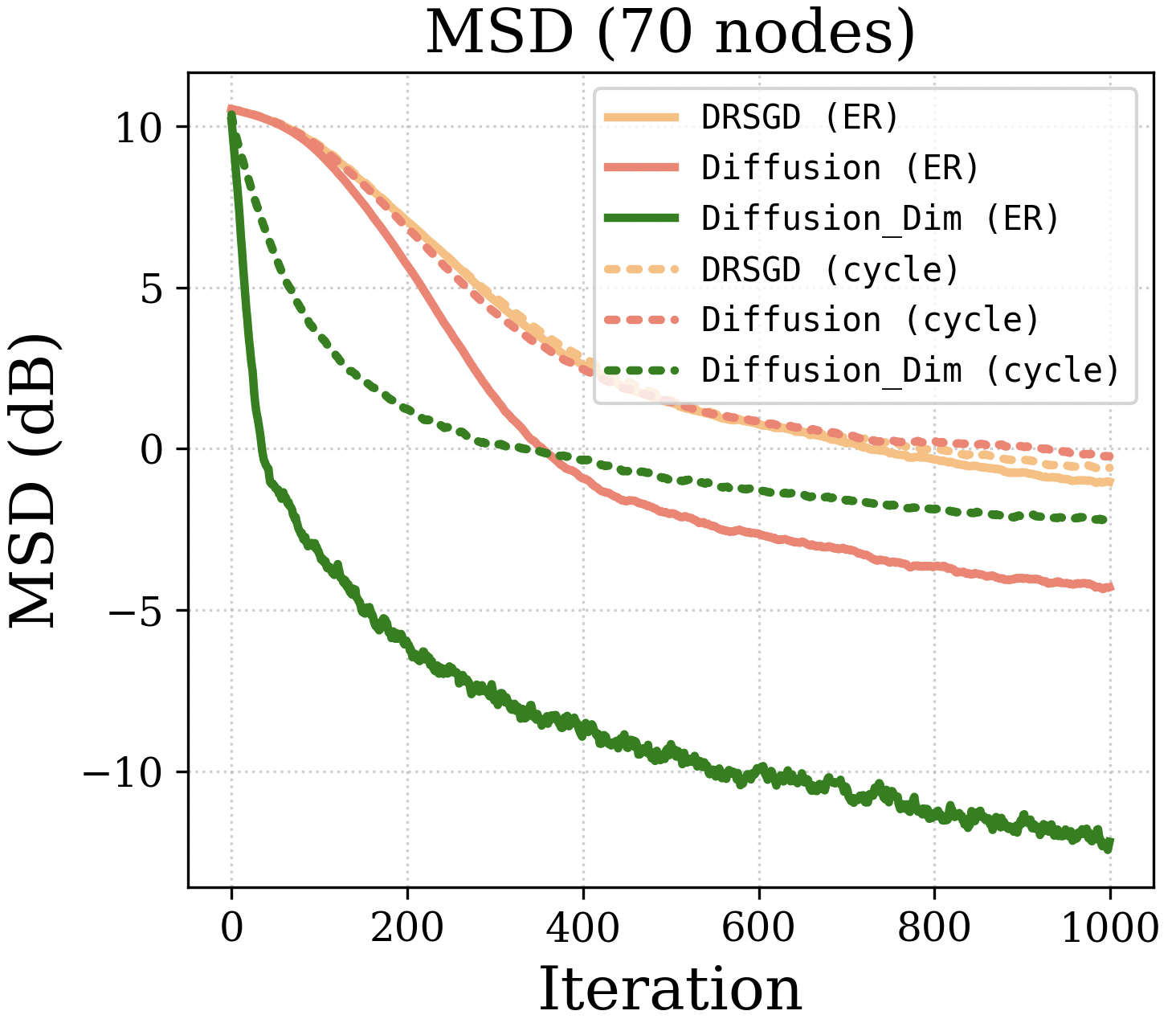}
  \label{fig:70-MSD}
\end{subfigure}\\
\begin{subfigure}{.48\linewidth}
  \centering
  \includegraphics[width=.93\linewidth]{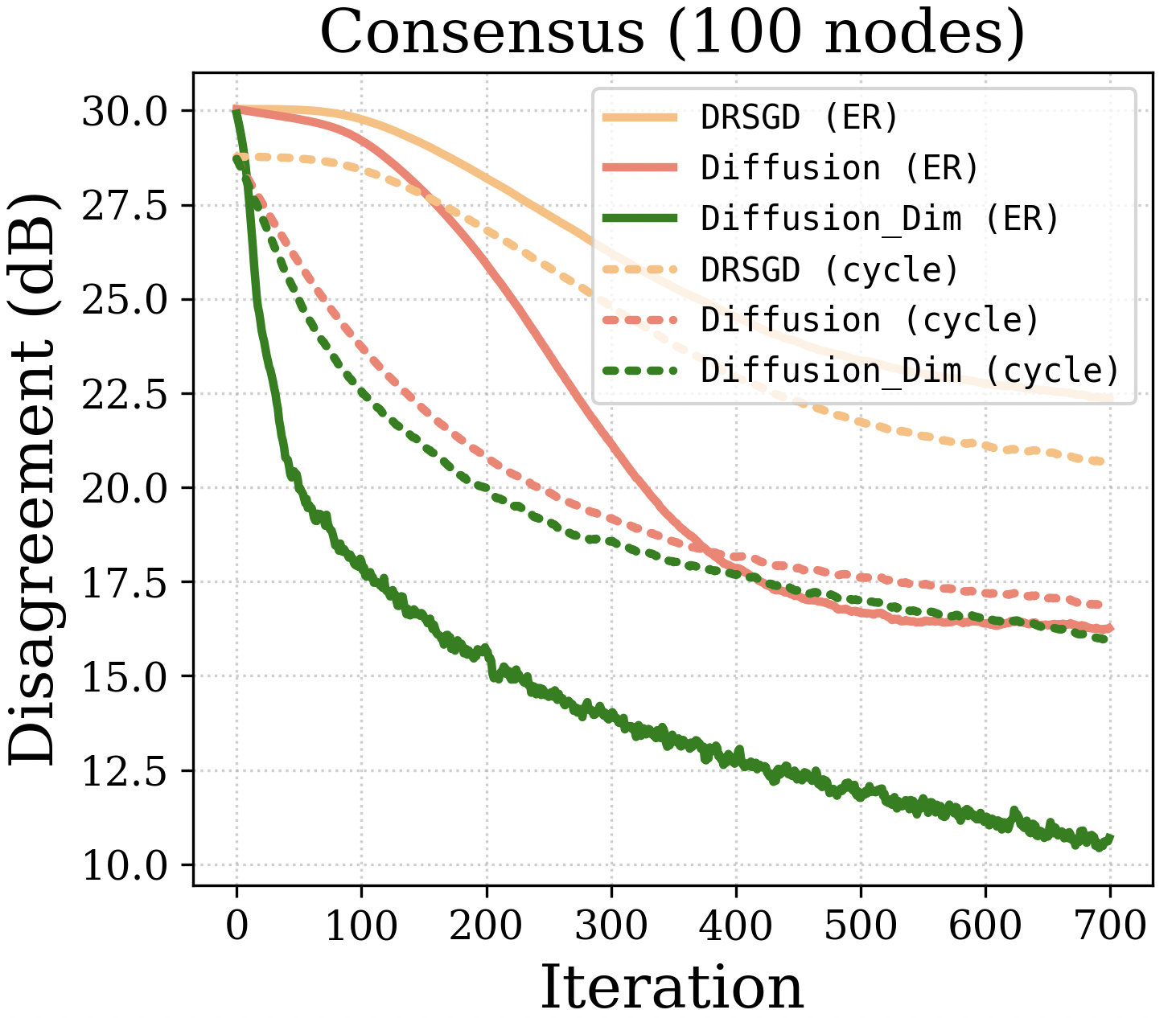}
  \label{fig:100-consensus}
\end{subfigure}%
\vspace{0.2cm}
\begin{subfigure}{.48\linewidth}
  \centering
  \includegraphics[width=.93\linewidth]{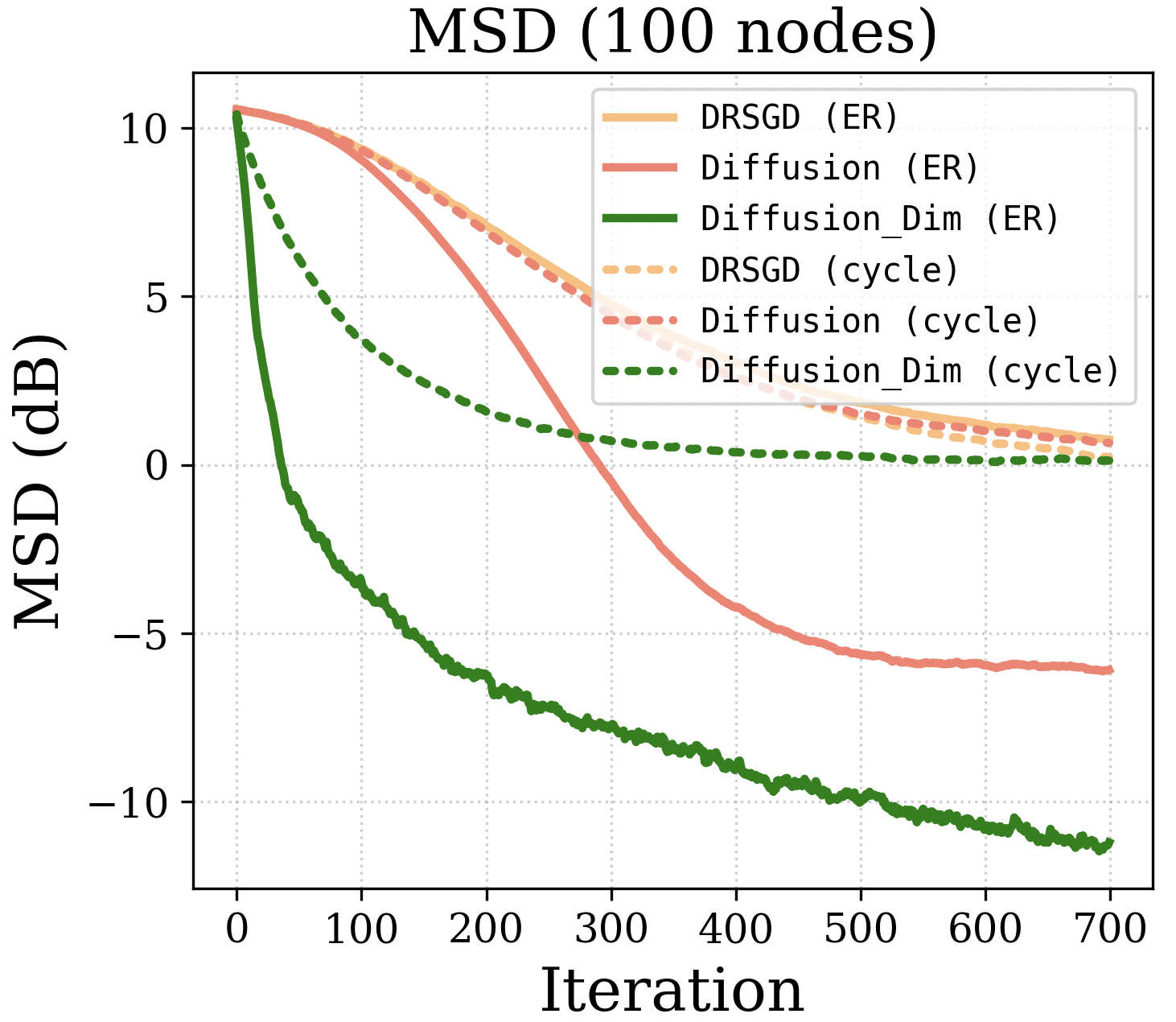}
  \label{fig:100-MSD}
\end{subfigure}
\vspace{-0.2cm}
    \caption{Consensus and Mean Squared Deviation (MSD) over ER and Cycle graphs with 35, 70, and 100 nodes }
    \label{fig:Consensus and MSD}
    \vspace{-0.7cm}
\end{figure}

\paragraph{Experiment set-up} The objective is to compute the first $p=5$ principal components of the MNIST dataset. The dataset consists of 60,000 handwritten digits, where each image is represented by $784$ pixels. The data is normalized to the range $[0, 1]$ and centered by subtracting the full dataset's mean. To benchmark the performance across varying network conditions, we consider six different graphs.
\begin{itemize}
    \item  Erdős-Rényi (ER) Graphs: With $n \in \{35, 70, 100\}$ nodes and an edge probability of $p=0.3$.
    \item Cycle Graphs: With $n \in \{35, 70, 100\}$ nodes.
\end{itemize}
The full dataset is randomly shuffled and partitioned equally among the $n$ agents. The problem is solved on the Grassmann manifold $\mathcal{G}_{784}^5$~\cite{edelman1998geometry}. The ``ground truth'' optimal subspace is computed by performing PCA on the entire 60,000-image dataset offline. Following~\cite{wang2025riemannian}, we adopt the corresponding fixed step sizes optimized for \texttt{Diffusion}: $\eta = 0.002$ and $s = 0.005$. For \texttt{Diffusion\_Dim} (Our Method), we utilize a diminishing step size $\eta_t = \eta_0 / \sqrt{t}$. For ER graphs, we use a significantly larger initial step size, $\eta_0 = 0.1$, for both the gradient and consensus step sizes, $s = 0.1$. For Cycle graphs, the initial step size $\eta_0 = 0.05$, which is moderately increased compared to the fixed baseline $\eta = 0.005$. We evaluate the algorithms using two metrics: Network Disagreement (Consensus Error), measured as $\sum w_{ij} d^2(x_i, x_j)$, and Mean Square Deviation (MSD), defined as the average squared geodesic distance to the global optimum \cite{wang2025riemannian}. Both metrics are reported in decibels (dB).

\paragraph{Performance on Erdős-Rényi Graphs} As shown in Fig. \ref{fig:Consensus and MSD} (solid lines), our \texttt{Diffusion\_Dim} method (green solid line) significantly outperforms all other decentralized baselines on ER graphs for both consensus error and MSD. Because ER graphs have high connectivity, they can tolerate the much larger initial step size ($\eta_0 = 0.1$). As the step size diminishes, the algorithm avoids the ``steady-state floor'' typically seen with fixed step sizes, reaching much lower MSD levels (approaching -15 dB) compared to the standard Diffusion method (-5 dB).
\paragraph{Performance on Cycle Graphs} On Cycle graphs (dashed lines), consensus is more difficult to achieve due to the large network diameter. However, \texttt{Diffusion\_Dim} (green dashed line) still provides a noticeable improvement over standard \texttt{Diffusion} and \texttt{DRSGD}. By slightly increasing the initial step size and then diminishing it, we achieve a faster initial descent and a more refined final estimate. 

\section{Discussion and Future Work}\label{sec:Discussion}
In this paper, we introduce a decentralized stochastic Riemannian optimization algorithm with a diminishing step size, thereby eliminating the steady-state bias of existing intrinsic-diffusion-type methods. In the static geodesically convex setting, we proved a non-asymptotic $\mathcal{O}(1/T)$ bound for the network consensus error and an $\mathcal{O}(\log T/\sqrt{T})$ ergodic bound for the global optimality gap. Promising future directions include extending the analysis to nonconvex objectives and developing intrinsic momentum-based methods.
\vspace{-0.1cm}
\bibliographystyle{IEEEtran}
\bibliography{references1}

\end{document}